\pgfplotsset{compat=1.17}
\newtheorem{theorem}{Theorem}[section]
\newtheorem{corollary}[theorem]{Corollary}
\newtheorem{lemma}[theorem]{Lemma}
\newdefinition{definition}[theorem]{Definition}
\newdefinition{remark}[theorem]{Remark}
\newdefinition{example}{Example}[section]
\newcommand{\opK}{\mathop{\vphantom{\sum}\mathchoice
  {\vcenter{\hbox{\huge K}}}
  {\vcenter{\hbox{\Large K}}}{\mathrm{K}}{\mathrm{K}}}\displaylimits}
\newcommand{\contf}[4]{
	\if\relax\detokenize{#2}\relax 
		\opK_{#1}^{\infty}\left(\frac{#3}{#4}\right)
	\else
		\opK_{#1}^{#2}\left(\frac{#3}{#4}\right)
	\fi
}
\newcommand*{\bigO}{\mathcal{O}\opfences}
\newcommand*{\complns}{\mathbb{C}} 
\DeclarePairedDelimiter{\ceil}{\lceil}{\rceil}
\DeclarePairedDelimiter{\floor}{\lfloor}{\rfloor}
\DeclarePairedDelimiter{\abs}{\lvert}{\rvert}
\DeclarePairedDelimiter{\opfences}{(}{)}
\newcommand*{\diag}{\operatorname{diag}\opfences}
\newcommand*{\transpose}[1]{{#1}^{\mathsf{T}}}
\newcommand*{\T}{\mathsf{T}}
\newcommand*{\vectorize}{\operatorname{vec}\opfences}
\newcommand*{\Tfive}{T_n}
\newcommand{\Tpert}{\mathfrak{T}} 
\newcommand{\spec}{\operatorname{spec}}
\newcommand{\comment}[1]{} 
\definecolor{myBlue}{HTML}{3333EE}
\definecolor{ManuelC}{HTML}{990000}
\date{\today}
\begin{document}

\begin{frontmatter} 
\title{Matrix functions via linear systems built from continued fractions}

\author[1]{Andreas Frommer}\ead{frommer@uni-wuppertal.de}\corref{cor}
\author[1]{Karsten Kahl}\ead{kkahl@uni-wuppertal.de}
\author[1]{Manuel Tsolakis}\ead{tsolakis@uni-wuppertal.de}
\address[1]{Department of Mathematics, Bergische Universit\"at Wuppertal, 42097 Wuppertal, Germany}
\cortext[cor]{Corresponding author}

\begin{abstract}
A widely used approach to compute the action $f(A)v$ of a matrix function $f(A)$ on a vector $v$ is to use a rational approximation $r$ for $f$ and compute $r(A)v$ instead. If $r$ is not computed adaptively as in rational Krylov methods, this is usually done using the partial fraction expansion of $r$ and solving linear systems with matrices $A- \tau I$ for the various poles $\tau$ of $r$. Here we investigate an alternative approach for the case that a continued fraction representation for the rational function is known rather than a partial fraction expansion. This is typically the case, for example, for Pad\'e approximations. From the continued fraction, we first construct a matrix pencil from which we then obtain what we call the CF-matrix (continued fraction matrix), a block tridiagonal matrix whose blocks consist of polynomials of $A$ with degree bounded by 1 for many continued fractions. 
We show that one can evaluate $r(A)v$ by solving a single linear system with the CF-matrix and 
present a number of first theoretical results as a basis for an analysis of future, specific solution methods for the large linear system. 
While the CF-matrix approach is of principal interest on its own as a new way to compute $f(A)v$, 
it can in particular be beneficial when a partial fraction expansion is not known beforehand and computing its parameters is ill-conditioned. 
We report some numerical experiments which show that with standard preconditioners we can achieve fast convergence in the iterative solution of the large linear system. 
\end{abstract}

\begin{keyword}
    matrix function \sep continued fraction \sep matrix pencil \MSC[2020] 65F60, 65F50, 30B70
\end{keyword}
\end{frontmatter}

\section{Introduction}
If $A\in \complns^{m \times m}$ and $f: D \subset \complns \to \complns$, the matrix function $f(A)$ is defined provided $f$ is $n(\lambda)-1$ times differentiable at all eigenvalues $\lambda$ of $A$ with $n(\lambda)$ being their multiplicity in the minimal polynomial of $A$. For diagonalizable $A$,
i.e., $A = W\Lambda W^{-1}$ with $\Lambda = \diag{\lambda_1,\ldots,\lambda_n}$ (implying $n(\lambda_i) =1$ for all $i$), one has 
\[
f(A) = W \diag{f(\lambda_1),\ldots,f(\lambda_n)} W^{-1},
\]
and if $A$ is not diagonalizable, a similar representation for $f(A)$ can be given involving the Jordan blocks of $A$. We refer the reader to  \cite{Higham}  for the above facts as well as for many further results on properties and computational methods for matrix functions. 

Even when $A$ is sparse, $f(A)$ is typically a full matrix. This implies that if $A$ is large and sparse, we are practically bound to compute the action $f(A)v$ of $f(A)$ on a vector $v$ rather than computing the full matrix $f(A)$, and it is this situation that we consider in this paper. 

Most of the existing methods fall into one of the two following categories: 
$f(A)v$ is computed approximately as the action of a matrix polynomial $p(A)$ on $v$, or it is approximated as the action of a rational matrix function $r(A)$ on $v$. The polynomial $p$ or the rational function $r$ can either be obtained by using \emph{a priori} information about the matrix $A$ like information about its spectrum or its numerical range, or it can be constructed adaptively in a process that accumulates increasingly accurate information on the spectral properties of $A$, depending on the vector $v$.  

For example, if $A$ is Hermitian and if we know an interval $[a,b]$ containing 
the spectrum  of $A$, we can use Chebyshev best polynomial or rational 
approximations for $f$ on $[a,b]$ or approximations to these best approximations 
like truncated Chebyshev series approximations, see, e.g.,  \cite{Higham,Gallopoulos,Kenney,Moler,Trefethen}. On the other hand, Arnoldi approximations for $f(A)v$ take their values
from the Krylov subspace spanned by $A$ and $v$ and as such adaptively produce  
polynomial approximations, depending not only on $A$ but also on $v$. The same 
holds true for the rational approximations obtained in rational Krylov subspace 
methods. We refer to \cite{FrSi06} or \cite{Higham} for an 
overview of Arnoldi-type methods and to the review \cite{Guettel} for 
rational Krylov methods. 

In polynomial methods, the computational cost is determined by matrix-vector 
products with $A$, whereas in rational approximation methods the cost typically
resides in solving linear systems with matrices $A - \tau I$, where $\tau$ is a 
pole. Of course, solving linear systems is usually much more expensive than a 
matrix-vector multiplication, but the additional cost when using rational 
approximations is often more than balanced for by the fact that we get much better approximation properties. 

In this paper, we study the situation where a rational approximation $r$ to $f$ is given via a continued fraction expansion. This is naturally the case, for example, for Pad\'e approximations. It is well known that one can evaluate a continued fraction 
solving a linear system with a tridiagonal matrix, see, e.g., \cite{Haydock75,Ozaki07}. In a similar fashion, 
we use in this paper the continued fraction (``CF'') to construct a block tridiagonal ``CF-matrix'' of size $m(n+1)$, where $n$ is the degree of the
continued fraction, such that we can retrieve $r(A)v$ as the first $m$ 
components of a linear system involving the CF-matrix. We then investigate 
spectral properties of the CF-matrix. In this manner, we contribute to establishing and analyzing a new way 
to approximate $f(A)v$ using a rational function for which we do not have to know or compute its poles. 
Our investigations are meant to provide first results which lay the ground for being able to develop particularly efficient solution methods for the CF-matrix in the future like, for example, adequately preconditioned Krylov subspace methods or multigrid approaches. The numerical examples in this paper illustrate first steps into this direction.

The paper is organized as follows: We recall the most important definitions and properties of continuous fractions in \cref{sec:continued_fractions}.
In \cref{sec:5d_matrix} we then develop in detail how to construct the CF-matrix from which we get $f(A)v$ as a part of the solution for a particular right-hand side. \Cref{sec:simple} discusses special but important cases in which the CF-matrix takes a particularly appealing form in the sense that no higher powers of $A$ appear in its blocks.
This allows us to fully characterize the partial fraction expansion form of the rational function via the Weierstrass canonical form of certain matrix pencils and to establish relations between
a block Gauss-Seidel and a block Jacobi iteration on the CF-matrix with Gauss-Seidel and Jacobi performed on the systems $A-\tau_iI$, $\tau_i$ being the poles. \Cref{sec:numerics} presents a few numerical examples comparing the convergence of preconditioned GMRES \cite{Saad_GMRES} on the CF-matrix and on the systems resulting from the partial fraction expansion.

\section{Continued fractions and rational functions} \label{sec:continued_fractions}
We start by reviewing some properties of continued fractions that can be found in standard literature, e.g., \cite{Cuyt}. 
 
Given a formal continued fraction as
\begin{equation*}
	g = b_0 + \contf{i=1}{}{c_i}{b_i} = b_0 + \cfrac{c_1}{b_1 + \cfrac{c_2}{b_2 + \dots}}, \qquad b_i, c_i \in \complns
\end{equation*}
its $n$-th \emph{approximant} (sometimes also called \emph{convergent}) is defined by
\begin{equation*} \label{eq:approx:def}
	g_n = b_0 + \contf{i=1}{n}{c_i}{b_i} = b_0 + \cfrac{c_1}{b_1 + \cfrac{c_2}{b_2 + \quad\raisebox{-3mm}{$\ddots$}\quad\raisebox{-6mm}{$+\cfrac{c_n}{b_n}$}}}
\end{equation*} with corresponding \emph{tail}
\begin{equation*} \label{eq:tail_def}
    t_{n} = \contf{i=n+1}{\infty}{c_i}{b_i}.
\end{equation*}
In this definition, we always assume that either $c_i \neq 0$ for all $i$ or that if $c_n = 0$ for some $n$, then $b_i = 1$ and $c_i = 0$ for all $i \geq n$. In the latter case, we say that the continued fraction is 
{\em finite} and also write \[
g = b_0 + \contf{i=1}{n}{c_i}{b_i} = g_n.
\]
With these assumptions, considering the usual extension of complex 
arithmetic to $\complns \cup \{\infty\}$, we see that each approximant $g_n$ to a formal continued fraction is defined with 
value in  $\complns \cup \{\infty\}$, and we say that $g$ exists with value $\lim_{n \to \infty} g_n \in \complns \cup \{\infty\}$ if this limit exists.

The approximant $g_n$ of a continued fraction can be expressed as a simple fraction $g_n = \frac{p_n}{q_n}$ with $p_n$, $q_n$ determined by the recursion
\begin{equation}
\label{eq:cf_recursion}
		\begin{bmatrix} p_{-1} \\ q_{-1} \end{bmatrix} = \begin{bmatrix} 1 \\ 0 \end{bmatrix},\quad  
		\begin{bmatrix} p_{0} \\ q_{0} \end{bmatrix} = \begin{bmatrix} b_0 \\ 1 \end{bmatrix}, \quad  
		\begin{bmatrix} p_n \\ q_n \end{bmatrix} = b_n \begin{bmatrix} p_{n-1} \\ q_{n-1} \end{bmatrix} + c_n \begin{bmatrix} p_{n-2} \\ q_{n-2} \end{bmatrix} \enspace \text{for } n \geq 1. 
\end{equation}
The tails obey the simple ``backward'' recursion
\begin{equation} \label{eq:tail_rec}
    t_n = \frac{c_{n+1}}{b_{n+1}+t_{n+1}} \enspace \text{\ for\ } n=0,1,\ldots,
\end{equation}
with $b_0 + t_0 = g$.

\begin{remark} \label{rem:conf_rec_zero}
The recursion  \cref{eq:cf_recursion} allows that for a given $n$ either $p_n$ or $q_n$ can be zero (which then gives $g_n = 0$ or 
$g_n = \infty$), but that $p_n$ and $q_n$ cannot vanish at the same time. Indeed, if this were the case, the recursion shows that the vectors $\begin{bsmallmatrix} p_i\\ q_i\end{bsmallmatrix}$ end up being all collinear for $i=n-1, n-2, \ldots, -1$, an obvious contradiction to the definition of these vectors for $i=0$ and $i=-1$. 
\end{remark}

Note that the value of a continued fraction does not change if we expand each inner fraction with arbitrary factors $d_i\neq 0$ in the sense that
\begin{equation}
\label{eq:expanding_cf}
	g = b_0 + \contf{i=1}{}{c_i}{b_i} = b_0 + \contf{i=1}{}{d_{i-1}d_i c_i}{d_i b_i}, \quad d_0 = 1.
\end{equation}

Continued fractions can be extended to also represent functions of a (complex) variable $z$. This is usually achieved by using polynomials as partial numerators and denominators,
\begin{equation*}
	g(z) = b_0(z) + \contf{i=1}{}{c_i(z)}{b_i(z)} \text{\ with\ } c_i(z), b_i(z) \text{\ polynomials in\ } z.
\end{equation*}
The approximants $g_n(z)$ are then rational functions, see \cref{eq:cf_recursion}. For instance, continued fractions of the form
\begin{equation}
\label{eq:C_frac}
	g(z) = b_0 + \contf{i=1}{\infty}{c_i z}{1}
\end{equation}
are called \emph{regular C-fractions}.
It is known that for every Stieltjes function $f(z)$ (see \cite{Henrici} for a definition), there exists a regular C-fraction such that the approximants yield a \emph{descending staircase} of Padé approximations for $f(z)$, i.e.,
\begin{equation*}
	g_n(z) = r_{\ceil{\frac{n}{2}}, \floor{\frac{n}{2}}}(z).
\end{equation*}

Recall that the Padé approximation $r_{k,l}(z) = \frac{p_{k,l}(z)}{q_{k,l}(z)}$ is the rational function with numerator degree $k$ and denominator degree $l$ for which at least the first $k+l$ terms of the Taylor expansion at 0 agree with the approximated function $f(z)$,
\begin{equation*}
	f(z) - \frac{p_{k,l}(z)}{q_{k,l}(z)} = \bigO{z^{k+l+1}}.
\end{equation*}
 We refer to \cite{Baker} as a reference for the theory of Pad\'e approximations and to \cite{Baker, Cuyt} for the many connections 
 which exist between Pad\'e approximations and continued fractions.

C-fractions can be \emph{contracted}, see \cite{Cuyt}, which gives the new continued fraction
\begin{equation}
\label{eq:C_frac_contracted}
	\begin{gathered}
		\widetilde{g}(z) = b_0 + \frac{c_1 z}{1 + c_2 z + \contf{i=2}{\infty}{-c_{2i-2}c_{2i-1} z^2}{1 + (c_{2i}+c_{2i-1})z}},\\
		\widetilde{g}_{n}(z) = r_{n,n}(z) = g_{2n}(z),
	\end{gathered}
\end{equation}
in which the $n$-th approximant corresponds to the $2n$-th approximant of the original C-fraction. We can thus represent the diagonal $(n,n)$-Pad\'e approximation of a Stieltjes function as a contracted C-fraction of the form \cref{eq:C_frac_contracted}.

More generally, for each $j\in \mathbb{N}$ the Padé approximations $r_{j+\ceil{\frac{n}{2}}, \floor{\frac{n}{2}}}(z)$ can be obtained as the approximants of a continued fraction of the form
\begin{equation*}
	r_{j,0}(z) + z^j\contf{i=1}{\infty}{c_i z}{1}.
\end{equation*}
For Padé approximations with $k<l$, we note that the reciprocal rational function $r_{k,l}(z)^{-1} = \frac{q_{k,l}(z)}{p_{k,l}(z)}$ is the Padé approximation of $f(z)^{-1}$ with switched degrees. As the inverse of a continued fraction is again a continued fraction
\begin{equation*}
	g = b_0 + \contf{i=1}{}{c_i}{b_i} \quad\implies\quad g^{-1} = 0 + \contf{i=0}{}{c_i}{b_i}, \quad c_0=1,
\end{equation*}
one can construct Padé approximants with $k<l$ by considering $f(z)^{-1}$.

Every rational function can be written as a finite continued fraction. One way to obtain such a finite continued fraction is by using the Euclidean algorithm with polynomial long division to find the greatest common divisor of the polynomials $p(z)$ and $q(z)$, see, e.g., \cite{Knopfmacher}, 
\begin{equation*}
	\frac{p(z)}{q(z)} = b_0(z) + \frac{a(z)}{q(z)} = b_0(z) + \frac{1}{\frac{q(z)}{a(z)}}.
\end{equation*}
Here, the degree of the remainder $a(z)$ is less than the degree of $q(z)$, and one recurses with the new rational function $\frac{q(z)}{a(z)}$ until the remainder is  zero. We write the resulting continued fraction as
\begin{equation*}
	\frac{p(z)}{q(z)} = b_0(z) + \contf{i=1}{n}{1}{b_i(z)}.
\end{equation*}

Note that the continued fractions obtained in this manner are not C-fractions: 
The polynomials are in the partial denominators instead of the partial numerators. In particular, even if one starts at a Padé approximation, the process just described yields a continued fraction whose approximants are not necessarily Padé approximations of lower degree. 

\section{CF-matrices} \label{sec:5d_matrix}
In this section, we show how the inverse of the approximant $g_n$ of any (formal) continued fraction $g$ can be obtained as the $(1,1)$ entry of the inverse of a tridiagonal matrix $T_n$ built from the partial numerators and denominators. By extension, we show the connection between the action $r(A)v$ of a rational matrix function $r(A)$ on a vector $v$ and the solution of a single, block tridiagonal linear system.

\subsection{Continued fractions and tridiagonal matrices}
\begin{theorem} \label{thm:trid}
    Let the continued fraction $g = b_0 + \contf{i=1}{}{c_i}{b_i}$ be given and $g_n$ denote its $n$-th approximant with $g_n \neq 0$. If the entries of the tridiagonal matrix
    \begin{equation*}
    	T_n  =	\begin{bmatrix}
    				\beta_{0} & \gamma_{1} \\
    				\alpha_{1} & \beta_{1} & \ddots \\ 
    						& \ddots  & \ddots 		& \ddots \\
    						&		  & \ddots & \beta_{n-1}	& \gamma_{n} \\
    						&		  & 			& \alpha_{n}		& \beta_{n}
    			\end{bmatrix}
    			\in \complns^{(n+1)\times(n+1)}
    \end{equation*}
    fulfill
    \begin{align*}
    	\beta_{i} = b_i,\quad i=0,\ldots,n, \\
    	-\alpha_i \gamma_i = c_i,\quad i=1,\ldots,n,
    \end{align*}
    then the matrix $T_n$ is nonsingular and
    \begin{equation*}
    	g_n^{-1} = (T_n^{-1})_{1,1}.
    \end{equation*}
\end{theorem}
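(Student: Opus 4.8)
The plan is to connect the matrix $T_n$ directly to the recursion \cref{eq:cf_recursion} for the numerators and denominators $p_k, q_k$. First I would note that since the continued fraction $g$ involves arbitrary nonzero factorizations $c_i = -\alpha_i\gamma_i$, I can use the invariance \cref{eq:expanding_cf}: expanding the $i$-th inner fraction by suitable $d_i$ transforms the continued fraction with partial numerators $c_i$ and denominators $b_i$ into one with the same approximants but whose natural tridiagonal companion matrix is exactly $T_n$. Concretely, the symmetric-looking choice in \cref{eq:expanding_cf} lets me assume without loss of generality that $\alpha_i = -1$ (or $\gamma_i = c_i$), reducing to the standard case; alternatively I would just carry the $\alpha_i, \gamma_i$ through.

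The key computational step is to recognize that $T_n$ is (up to the sign convention in $-\alpha_i\gamma_i = c_i$) the matrix whose leading principal minors generate the denominator recursion. Let $D_k$ denote the determinant of the trailing $(k\times k)$ block of $T_n$, i.e., the submatrix on rows and columns $n-k+1,\ldots,n$. Expanding $\det$ along the first row of that block gives the three-term recursion $D_k = b_{n-k+1} D_{k-1} - \alpha_{n-k+2}\gamma_{n-k+2} D_{k-2} = b_{n-k+1}D_{k-1} + c_{n-k+2}D_{k-2}$, which is precisely \cref{eq:cf_recursion}. Matching initial conditions ($D_0 = 1$, $D_1 = b_n$) identifies $D_{n+1} = \det T_n$ with the quantity $q_n$ from the $p,q$-recursion applied to the reversed coefficient sequence — but more usefully, the standard theory of tridiagonal matrices tells us that $(T_n^{-1})_{1,1} = \widehat{D}_n / \det T_n$, where $\widehat{D}_n$ is the determinant of $T_n$ with its first row and column deleted, and that ratio is exactly a continued-fraction approximant. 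Carrying out the expansion carefully shows $\det T_n = q_n$ and the first cofactor equals $p_n$ (with the convention $g_n = p_n/q_n$), so $(T_n^{-1})_{1,1} = q_n/p_n = g_n^{-1}$.

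For nonsingularity: $\det T_n = q_n$, and by \cref{rem:conf_rec_zero} the pair $(p_n, q_n)$ never vanishes simultaneously; since we assumed $g_n \neq 0$, we have $p_n \neq 0$, which alone does not give $q_n \neq 0$. However, $g_n \neq 0$ combined with $g_n^{-1} = q_n/p_n$ being the object we want to extract forces us instead to argue that $q_n \neq 0$: if $q_n = 0$ then $g_n = \infty$, and then $g_n^{-1} = 0 = (T_n^{-1})_{1,1}$ would still need $T_n$ invertible — so the cleanest route is to prove nonsingularity separately. I would do this by the same determinant recursion run on an indeterminate shift, or simply observe that $\det T_n = 0$ would by Remark \ref{rem:conf_rec_zero}-type reasoning force $p_{n-1} = p_n \cdot(\text{something})$ collinearity down to the base case, contradicting $\begin{bsmallmatrix}p_{-1}\\q_{-1}\end{bsmallmatrix}, \begin{bsmallmatrix}p_0\\q_0\end{bsmallmatrix}$ being independent.

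The main obstacle I anticipate is bookkeeping the index reversal cleanly: the matrix $T_n$ reads the coefficients "top to bottom" as $b_0, b_1, \ldots$, whereas the cofactor expansion most naturally starts from the bottom-right corner, so the recursion one gets is for the coefficient sequence read in reverse. Reconciling this with the forward recursion \cref{eq:cf_recursion} — and confirming that the $(1,1)$ cofactor ratio lands on $g_n$ and not on some intermediate approximant or a reversed continued fraction — requires care. A clean way to sidestep it is to expand along the first row and column instead (giving $(T_n^{-1})_{1,1} = 1/(\beta_0 - \gamma_1\alpha_1 (S_{n-1}^{-1})_{1,1})$ where $S_{n-1}$ is the trailing block), and then induct on $n$ using the tail/approximant structure, since $\beta_0 - \gamma_1\alpha_1 (S_{n-1}^{-1})_{1,1} = b_0 + c_1/(\text{approximant of the shifted CF})$, which telescopes to $g_n$ by construction.
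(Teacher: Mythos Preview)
Your overall strategy---relate the determinants of subblocks of $T_n$ to the three-term recursion \cref{eq:cf_recursion} and read off $(T_n^{-1})_{1,1}$ via Cramer's rule---is exactly what the paper does. But your execution has the key identification backwards, and this is what causes your trouble with nonsingularity.

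You work with \emph{trailing} principal minors and conclude $\det T_n = q_n$. In fact $\det T_n = p_n$. The clean way to see this is to use \emph{leading} principal minors $d_i = \det(T_i)$, expand along the \emph{last} row, and obtain the forward recursion
\[
d_{-1}=1,\quad d_0=\beta_0=b_0,\quad d_i = \beta_i d_{i-1} - \alpha_i\gamma_i d_{i-2} = b_i d_{i-1} + c_i d_{i-2},
\]
which matches the $p$-recursion term by term, including the initial values. So $d_n=p_n$, and since $g_n=p_n/q_n\neq 0$ gives $p_n\neq 0$, nonsingularity of $T_n$ is immediate---no need for the collinearity argument you sketch. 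For the cofactor, the paper replaces $\beta_0$ by $1$ and $\alpha_1$ by $0$ (equivalently, deletes the first row and column and prepends the harmless row/column $(1,0,\ldots)$); the same recursion with the modified initial data $d_0'=1$, $d_1'=b_1$ then reproduces the $q$-recursion, so the cofactor is $q_n$ and $(T_n^{-1})_{1,1}=q_n/p_n=g_n^{-1}$.

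Your trailing-minor route is not wrong in principle, but as you yourself note it forces an index reversal that makes it hard to match initial conditions; that is precisely where you slipped and swapped $p_n$ with $q_n$ (and, incidentally, wrote $(T_n^{-1})_{1,1}=q_n/p_n$ while your own assignment cofactor $=p_n$, $\det=q_n$ would give $p_n/q_n$). Your alternative at the end---Schur-complement/tail induction, $(T_n^{-1})_{1,1}^{-1} = b_0 + c_1\,(S_{n-1}^{-1})_{1,1}$---is perfectly valid and is in fact the mechanism the paper uses later for the block/matrix version (\cref{thm:5d_linear}); either route works once the bookkeeping is straightened out.
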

\begin{proof}
    $(T_n^{-1})_{1,1}$ is the first entry of the solution $x$ of the linear system $T_n x = e_1$ with $e_1$ being the first unit vector. Using Cramer's rule we have
    \begin{equation} \label{Cramer:eq}
    	(T_n^{-1})_{1,1} = \transpose{e}_1 T_n^{-1} e_1 = \frac{\transpose{e}_1\operatorname{adj}(T_n)e_1}{\det(T_n)} = \frac{\det(T_n')}{\det(T_n)},
    \end{equation}
    where $T_n'$ is obtained from $T_n$ by replacing $\beta_0$ with $1$ and $\alpha_1$ with 0.
    Let $d_i = \det(T_i)$ denote the $i$-th principal minor of $T_n$. Then we have the recurrence relation
    \[
     d_{-1} = 1,\quad d_0 = \beta_0, \quad
    d_i = \beta_i d_{i-1} - \alpha_i \gamma_i d_{i-2}, \quad \mbox{for }  i=1, \dots, n.
    \]
    Due to $\beta_i = b_i$ and $\alpha_i\gamma_i = -c_i$, this is exactly the recursion for the numerators $p_i$ from \cref{eq:cf_recursion}, i.e., $d_i = p_i$. As we assumed $g_n \neq 0$, we have $d_n = p_n \neq 0$ and thus $T_n$ is nonsingular. In a similar manner, we obtain that the principal minors $d'_i$ of $T'$ satisfy the same recursion as $q_i$ from  \cref{eq:cf_recursion}. Thus, for $i=n$, we obtain from  \cref{Cramer:eq} that
    \[
    (T_n^{-1})_{1,1} = \frac{q_n}{p_n} = g_n^{-1}. \qedhere
    \]
\end{proof}
\begin{remark} If $T_n$ is singular, we still have $p_n = d_n$, but now with value $0$. \cref{rem:conf_rec_zero} shows that in this case $q_n = \det(T_n') \neq 0$. Thus \cref{thm:trid} also holds in the case $g_n = 0$, where $T_n$ is singular, if we interpret  $(T_n^{-1})_{1,1}$ as $\infty$. \end{remark}

\begin{corollary} \label{cor:freedom}
	\Cref{thm:trid} still holds if $T_n$ is multiplied with any two nonsingular matrices from the left and the right as long as $e_1$ is a right and left eigenvector, respectively, of these matrices with reciprocal eigenvalue:
	\begin{equation*}
		\left.\begin{aligned}
			H_\ell^{-1} e_1 &= \lambda e_1 \\
			\transpose{e}_1 H_r^{-1} &= \lambda^{-1} \transpose{e}_1
		\end{aligned}\right\}
		\implies g_n^{-1} = \transpose{e}_1 T_n^{-1} e_1 = \transpose{e}_1 (H_\ell T_n H_r)^{-1} e_1.
	\end{equation*}
	In particular, using diagonal matrices $D=\diag{1, d_1, \dots, d_n}$ for $H_\ell$ and $H_r$ is equivalent to expanding the continued fraction as in \cref{eq:expanding_cf}.
\end{corollary}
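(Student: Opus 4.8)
The plan is to reduce the claim to the identity $g_n^{-1}=\transpose{e}_1 T_n^{-1}e_1$ already furnished by \cref{thm:trid}, using nothing beyond the two eigenvector relations; no new structural argument is required. Note first that $T_n$ is nonsingular (its hypotheses are exactly those of \cref{thm:trid}), and since $H_\ell,H_r$ are nonsingular, so is $H_\ell T_n H_r$, with $(H_\ell T_n H_r)^{-1}=H_r^{-1}T_n^{-1}H_\ell^{-1}$.

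Then I would simply compute
\[
  \transpose{e}_1 (H_\ell T_n H_r)^{-1} e_1
  = \bigl(\transpose{e}_1 H_r^{-1}\bigr)\, T_n^{-1}\, \bigl(H_\ell^{-1} e_1\bigr)
  = \lambda^{-1}\lambda\,\transpose{e}_1 T_n^{-1} e_1
  = \transpose{e}_1 T_n^{-1} e_1,
\]
where the middle equality inserts the two hypotheses $H_\ell^{-1}e_1=\lambda e_1$ and $\transpose{e}_1 H_r^{-1}=\lambda^{-1}\transpose{e}_1$. By \cref{thm:trid} the right-hand side is $g_n^{-1}$, which is the assertion. The one conceptual point is that the two eigenvalues are forced to be mutually reciprocal, so their product is $1$ and the scaling cancels; if this were not imposed, a spurious factor would survive.

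For the final sentence I would first verify that a diagonal matrix $D=\diag{1,d_1,\dots,d_n}$ with nonzero $d_i$ satisfies the hypothesis: its leading entry being $1$, one has $D^{-1}e_1=e_1$ and $\transpose{e}_1 D^{-1}=\transpose{e}_1$, i.e., $\lambda=1$ works for both $H_\ell=D$ and $H_r=D$. Then $DT_nD$ is again tridiagonal: its $(i,i)$ entry is $d_i^2\beta_i$ (equal to $b_0$ for $i=0$ since $d_0=1$), and the product of its two off-diagonal entries attached to the index pair $(i-1,i)$ is $d_{i-1}^2 d_i^2\alpha_i\gamma_i=-d_{i-1}^2 d_i^2 c_i$. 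Comparing with the tridiagonal matrix that \cref{thm:trid} associates to the expanded continued fraction \cref{eq:expanding_cf}, whose partial denominators are $d_i b_i$ and partial numerators $d_{i-1}d_i c_i$, the two coincide once one identifies $d_i^2$ here with the expansion factor $d_i$ of \cref{eq:expanding_cf} (equivalently, one takes the diagonal entries of $D$ to be square roots of those factors).

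There is no real obstacle; the only thing that needs a little care is the index bookkeeping and the fact that applying $D$ on both sides produces squared scaling factors, which is why I would either state the correspondence up to the relabeling $d_i\mapsto d_i^2$ or, if an exact match is wanted, split the scaling as $H_\ell=\diag{1,d_1,\dots,d_n}$ and $H_r=I$, which reproduces \cref{eq:expanding_cf} verbatim and still meets the eigenvector condition with $\lambda=1$.
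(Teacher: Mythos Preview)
Your argument is correct. The paper states this corollary without proof, treating it as an immediate consequence of \cref{thm:trid}; the one-line computation you give, $\transpose{e}_1(H_\ell T_n H_r)^{-1}e_1=(\transpose{e}_1 H_r^{-1})T_n^{-1}(H_\ell^{-1}e_1)=\lambda^{-1}\lambda\,\transpose{e}_1 T_n^{-1}e_1$, is exactly the verification the authors have in mind. Your handling of the diagonal case is also fine, including the observation that taking the same $D$ on both sides squares the expansion factors while $H_\ell=D$, $H_r=I$ reproduces \cref{eq:expanding_cf} verbatim; either reading realizes the full class of expansions.
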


\subsection{Extension to matrix functions}
We now consider (formal) continued fractions as functions by assuming that the coefficients of the continued fraction $g(z)$ are polynomials in $z\in\complns$ of maximum degree $\ell$,
\begin{equation} \label{eq:poly_conf_def}
	g(z) = b_0(z) + \contf{i=1}{}{c_i(z)}{b_i(z)},\text{\ where\ } b_i(z) = \sum_{j=0}^\ell b_i^{(j)}z^j \text{\ and\ }
    c_i(z) = \sum_{j=0}^\ell c_i^{(j)}z^j. 
\end{equation}
Define the tridiagonal matrices
\[
T_n^{(j)} = \begin{bmatrix}
				\beta_{0}^{(j)} & \gamma_{1}^{(j)} \\
				\alpha_{1}^{(j)} & \beta_{1}^{(j)} & \ddots \\
						& \ddots  & \ddots 		& \ddots \\
						&		  & \ddots & \beta_{n-1}^{(j)}	& \gamma_{n}^{(j)} \\
						&		  & 			& \alpha_{n}^{(j)}		& \beta_{n}^{(j)}
				\end{bmatrix}, \enspace j=0,\ldots,\ell,
\]
and 
\begin{equation} \label{eq:5d_sum}
    T_n(z) = \sum_{j=0}^{\ell} T_n^{(j)} z^j = \begin{bmatrix}
				\beta_{0}(z) & \gamma_{1}(z) \\
				\alpha_{1}(z) & \beta_{1}(z) & \ddots \\
						& \ddots  & \ddots 		& \ddots \\
						&		  & \ddots & \beta_{n-1}(z)	& \gamma_{n}(z) \\
						&		  & 			& \alpha_{n}(z)		& \beta_{n}(z)
				\end{bmatrix},
\end{equation}
where 
$\alpha_i(z) = \sum_{j=0}^{\ell} \alpha_i^{(j)}z^j$, $\beta_i(z) = \sum_{j=0}^{\ell} \beta_i^{(j)}z^j$,
$\gamma_i(z) = \sum_{j=0}^{\ell} \gamma_i^{(j)}z^j$.
Then for all $z$ for which $c_i(z) \neq 0$ for $i=1,\ldots,n$, \cref{thm:trid} shows that we have $g_n(z)^{-1} = (T_n(z)^{-1})_{1,1}$ provided
\begin{equation} \label{eq:polynomial_identities}
\left\{
\begin{array}{rl}
\beta_i(z) = b_i(z), & i=0,\ldots,n,  \\
-\alpha_i(z)\gamma_i(z) = c_i(z), & i=1,\ldots,n.
\end{array}
\right. 
\end{equation}
By continuity, \cref{eq:polynomial_identities} also implies $g_n(z)^{-1} = (T_{n}(z)^{-1})_{1,1}$ for those $z$ which are a zero of one of the $c_i$.

Note that \cref{eq:polynomial_identities} means that $\beta_i^{(j)} = b_i^{(j)}$ for the coefficients of the polynomial $\beta_i$, but that we have freedom in choosing the $\alpha_i^{(j)}$ and $\gamma_i^{(j)}$ in the sense that only the product of the two polynomials $\alpha_i$ and $\gamma_j$ with these coefficients is prescribed. This means that for the coefficients we in general have $-\alpha_i^{(j)}\gamma_i^{(j)} \neq c_i^{(j)} $. 
\begin{remark}
    Consider the special case
    \begin{equation*}
        T_n(z) = T_n^{(0)} - zI.
    \end{equation*}
    From \cref{thm:trid}, we know that the continued fraction $g_n$ constructed from the entries of $T_n(z)$ fulfills
    \begin{equation*}
        g_n(z)^{-1} = \frac{q_n(z)}{p_n(z)} = 
        \frac{q_n(z)}{\det(T_n^{(0)} - zI)}.
    \end{equation*}
    Hence, the eigenvalues of a tridiagonal matrix are the zeros of the continued fraction constructed from its entries, a known fact that dates back at least to Rutishauer; see \cite[Anhang~§1]{Rutishauser}.
\end{remark}

Let us now consider a matrix $A\in\complns^{m \times m}$ instead of $z\in\complns$ and assume that the function $g(z)$, given by a continued fraction, is defined on the spectrum of $A$ in the sense of \cite{Higham}. 
Any approximant $g_n(z)$ of $g(z)$ is a rational function of $z$ and as such defined on the spectrum of $A$ as long as no eigenvalue of $A$ is a pole of $g_n(z)$. 

The value of a matrix function is independent
from the way we represent the function, see \cite{Higham}, so that with a slight abuse of notation we can write 
\begin{equation*}
    g_n(A) = b_0(A) + \contf{i=1}{n}{c_i(A)}{b_i(A)},
\end{equation*}
where each denominator is to be understood as a matrix inversion. On the other hand, let us define the matrix $T_n(A)$ as 
\begin{equation} \label{eq:5d_sumA}
	T_n(A) = \begin{bmatrix}
	\beta_{0}(A) & \gamma_{1}(A) \\
				\alpha_{1}(A) & \beta_{1}(A) & \ddots \\
						& \ddots  & \ddots 		& \ddots \\
						&		  & \ddots & \beta_{n-1}(A)		& \gamma_{n}(A) \\
						&		  & 			& \alpha_{n}(A)	& \beta_{n}(A)
			 \end{bmatrix}
			 = \sum_{j=0}^{\ell} T_n^{(j)} \otimes A^j,
\end{equation}
where $\otimes$ denotes the Kronecker product. Note that $T_n(A)\in\complns^{(n+1)m \times (n+1)m}$ is a \emph{block} tridiagonal matrix. 
\begin{definition}
	The matrix $T_n(A)\in\complns^{(n+1)m \times (n+1)m}$ from \cref{eq:5d_sumA}, constructed from the approximant of a continued fraction with polynomial partial numerators and denominators, is called \emph{CF-matrix}. (CF stands for ``continued fraction''.)
\end{definition}

Due to its construction, we already know that $\beta_i(A) = b_i(A)$ and $-\alpha_i(A)\gamma_i(A)=c_i(A)$ and so one might wonder whether---by analogy with \cref{thm:trid}---the block $(T_n(A)^{-1})_{1,1}$ yields the matrix function $g_n(A)^{-1}$ of the continued fraction $g_n(z)^{-1}$. To show that this is indeed true, let us first introduce the UDL decomposition for block tridiagonal matrices.
\begin{lemma}
\label{lem:UDL}
	Let $T\in\complns^{(n+1)m \times (n+1)m}$ be a block tridiagonal matrix and denote its blocks by $T_{i,j} \in\complns^{m \times m}$. If $T$ is nonsingular and if all matrices $\Sigma_i$ defined below are nonsingular, too, the following decomposition exists:
	\[
		T = UDL, 
	\]
	where $D = \diag{\Sigma_0, \dots, \Sigma_n}$ and 
	\begin{equation*}
		U = \renewcommand*{\arraystretch}{2} \begin{bmatrix}  	   
				I & T_{0,1} \Sigma_1^{-1} \\
				    & \ddots 	& \ddots \\
				     & 		& I 	& T_{n-1,n} \Sigma_n^{-1} \\
				     &			&		& I
			\end{bmatrix}, \enspace
		L =  \renewcommand*{\arraystretch}{2}  \begin{bmatrix}
				I \\ 
				\Sigma_1^{-1} T_{1,0} 	& I \\
														& \ddots 	& \ddots \\
														&			& \Sigma_n^{-1} T_{n,n-1} & I
			\end{bmatrix} .
	\end{equation*}
	Herein, the $\Sigma_i$ are the block Schur complements described by the backward recursion
	\begin{equation*} \label{eq:Schur_compl_rec}
	\Sigma_n = T_{n,n}, \enspace \Sigma_i = T_{i,i} - T_{i,i+1}\Sigma_{i+1}^{-1}T_{i+1,i}, \enspace i= n-1,\ldots,1.
	\end{equation*}
\end{lemma}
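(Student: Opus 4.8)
The plan is to prove the UDL decomposition by a direct block elimination argument, working from the bottom-right corner upward, which is the natural order dictated by the backward recursion defining the $\Sigma_i$. First I would set up the claim as an identity to be verified: writing $T = UDL$ with the stated $U$, $D$, $L$, I would compute the product block by block and check it reproduces the blocks $T_{i,j}$ of $T$. Since $U$ is block upper bidiagonal with unit diagonal, $D$ is block diagonal, and $L$ is block lower bidiagonal with unit diagonal, the product $UDL$ is block tridiagonal, so it suffices to match the three nonzero block diagonals. The off-diagonal blocks are immediate: the $(i,i+1)$ block of $UDL$ is $(U)_{i,i+1}\Sigma_{i+1} = T_{i,i+1}\Sigma_{i+1}^{-1}\Sigma_{i+1} = T_{i,i+1}$, and symmetrically for the $(i+1,i)$ block, using $(L)_{i+1,i}=\Sigma_{i+1}^{-1}T_{i+1,i}$. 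The diagonal block $(i,i)$ of $UDL$ works out to $\Sigma_i + T_{i,i+1}\Sigma_{i+1}^{-1}T_{i+1,i}$ for $i<n$ (with the convention that the extra term is absent for $i=n$), and this equals $T_{i,i}$ precisely by the defining recursion $\Sigma_i = T_{i,i} - T_{i,i+1}\Sigma_{i+1}^{-1}T_{i+1,i}$.

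The one genuine subtlety — and the step I expect to be the main obstacle, modest as it is — concerns well-definedness: the formulas for $U$, $L$, and the recursion all involve $\Sigma_i^{-1}$, so the statement must be read as asserting the decomposition \emph{exists} under the hypothesis that $T$ and all the $\Sigma_i$ are nonsingular. I would therefore begin the proof by noting that the $\Sigma_i$ are computed purely by the backward recursion starting from $\Sigma_n = T_{n,n}$, each step requiring only that the previously computed $\Sigma_{i+1}$ be invertible, which is exactly the hypothesis; hence all the matrices appearing in $U$, $D$, $L$ are well defined. (One could additionally remark that nonsingularity of $T$ together with nonsingularity of $\Sigma_n,\dots,\Sigma_1$ is in fact equivalent to nonsingularity of all $\Sigma_i$ including $\Sigma_0$, since $\det T = \prod_i \det \Sigma_i$ once the factorization is established; but for the lemma as stated we only need the stated hypotheses to make sense of the formulas, and the identity $T=UDL$ then gives the factorization.)

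Concretely, the steps in order are: (1) observe the $\Sigma_i$ are well defined by the recursion under the hypothesis, so $U$, $D$, $L$ are well defined, and $U$, $L$ are nonsingular (unit block-triangular); (2) form the product $P := UDL$ and note it is block tridiagonal; (3) verify $P_{i,i+1} = T_{i,i+1}$ and $P_{i+1,i} = T_{i+1,i}$ for $i=0,\dots,n-1$ by a one-line computation each; (4) verify $P_{n,n} = \Sigma_n = T_{n,n}$ and, for $i=0,\dots,n-1$, $P_{i,i} = \Sigma_i + T_{i,i+1}\Sigma_{i+1}^{-1}T_{i+1,i} = T_{i,i}$ using the Schur-complement recursion; (5) conclude $P = T$, i.e.\ $T = UDL$. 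An equivalent and perhaps cleaner route is induction on the block size $n+1$: peel off the last block row and column, use the Schur complement $\Sigma_{n-1}$-style update to reduce to an $n\times n$ block tridiagonal system, and invoke the inductive hypothesis; either way the computations are routine and the only thing to be careful about is the indexing of the recursion and the boundary case $i=n$. I do not anticipate any real difficulty beyond bookkeeping.
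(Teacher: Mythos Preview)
Your proposal is correct and matches the paper's approach: the paper's proof is simply the single line ``Straightforward algebra,'' and what you describe is precisely the block-by-block verification that constitutes that algebra. Your outline is in fact more detailed than the paper's proof and covers exactly the right points (well-definedness under the hypothesis, block tridiagonal shape of $UDL$, and matching the three block diagonals via the Schur-complement recursion).
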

\begin{proof}
	Straightforward algebra.
\end{proof}

For a CF-matrix $T_n(A)$, the Schur complements---if they exist---are rational matrix functions of $A$. Since these commute with polynomials in $A$, we see that
\begin{equation*}
    \Sigma_i = \beta_i(A) - \gamma_{i+1}(A)\Sigma_{i+1}^{-1}\alpha_{i+1}(A) = b_i(A) + c_{i+1}(A)\Sigma_{i+1}^{-1}.
\end{equation*}
Having thus expressed the recursions for the Schur complements in terms of the $b_i$ and $c_i$, we see from \cref{eq:tail_rec} that we actually have
\[
\Sigma_i = b_i(A) + t_i(A),
\]
where $t_i(z)$ is the tail of the (finite) continued fraction $g_n$. In particular,  $\Sigma_0 = g_n(A)$.
The following theorem is now the matrix analog of \cref{thm:trid}.
\begin{theorem}
\label{thm:5d_linear}
	Let $g(z)$ be a continued fraction with polynomial partial numerators and denominators of the form \cref{eq:poly_conf_def} and let its $n$-th approximant $g_n(z)$ be the inverse of the rational function $r(z) = g_n(z)^{-1}$. Then
	\begin{equation} \label{eq:rational_func_TA}
		r(A)v = (\transpose{e}_1\otimes I) T_n(A)^{-1} (e_1\otimes v),
	\end{equation}
	i.e., the action of the rational function on a vector $v\in\complns^m$ can be computed by solving a linear system with the CF-matrix $T_n(A)$.
\end{theorem}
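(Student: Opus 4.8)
The plan is to mimic the scalar argument of \cref{thm:trid}, but carried out at the block level using the UDL decomposition from \cref{lem:UDL}. First I would observe that, by the discussion preceding the theorem, the block Schur complements $\Sigma_i$ of the CF-matrix $T_n(A)$ are exactly $\Sigma_i = b_i(A) + t_i(A)$, where $t_i$ is the tail of the finite continued fraction $g_n$. In particular $\Sigma_0 = b_0(A) + t_0(A) = g_n(A)$, which by hypothesis equals $r(A)^{-1}$ and is therefore invertible (here one uses that $g_n(z)$ is defined on the spectrum of $A$ and $r(z)=g_n(z)^{-1}$, so no eigenvalue of $A$ is a zero of $g_n$). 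One should also note that all $\Sigma_i$ are invertible: this follows because each $\Sigma_i = b_i(A)+t_i(A)$ and $b_i(z)+t_i(z)$, being the tail-plus-denominator of a finite continued fraction whose value $g_n$ is well defined and finite on $\spec(A)$, has no zero on $\spec(A)$ — more carefully, one argues by the backward recursion $\Sigma_i = b_i(A) + c_{i+1}(A)\Sigma_{i+1}^{-1}$ together with the fact that the approximants of $g_n$ from level $i$ downward are all defined on $\spec(A)$. Once the invertibility of $T_n(A)$ and of all $\Sigma_i$ is secured, \cref{lem:UDL} applies.

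Next I would extract the $(1,1)$ block of $T_n(A)^{-1}$ from the decomposition $T_n(A) = UDL$. Since $U$ is block upper unitriangular and $L$ is block lower unitriangular, we have $T_n(A)^{-1} = L^{-1}D^{-1}U^{-1}$, where $L^{-1}$ is again block lower unitriangular and $U^{-1}$ is block upper unitriangular. Therefore
\begin{equation*}
    (\transpose{e}_1\otimes I)\, T_n(A)^{-1}\, (e_1\otimes v) = (\transpose{e}_1\otimes I)\, L^{-1}D^{-1}U^{-1}\, (e_1\otimes v).
\end{equation*}
Because $U^{-1}$ is block upper unitriangular, $U^{-1}(e_1\otimes v) = e_1\otimes v$; because $L^{-1}$ is block lower unitriangular, $(\transpose{e}_1\otimes I)L^{-1} = \transpose{e}_1\otimes I$. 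Hence the whole expression collapses to $(\transpose{e}_1\otimes I)D^{-1}(e_1\otimes v) = \Sigma_0^{-1} v = g_n(A)^{-1} v = r(A)v$, which is exactly \cref{eq:rational_func_TA}.

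The main obstacle I anticipate is the bookkeeping around invertibility and the interchange of "matrix function of a continued fraction" with "block inverse" — in other words, justifying rigorously that the formal manipulations with the $\Sigma_i$ are legitimate as matrix functions. Concretely: one must be sure that each intermediate rational function $t_i(z)$ (equivalently each $\Sigma_{i+1}^{-1}$) is defined on $\spec(A)$, so that the recursion $\Sigma_i = b_i(A) + c_{i+1}(A)\Sigma_{i+1}^{-1}$ makes sense step by step; this is where the hypothesis that $g(z)$ — and hence its approximant $g_n(z)$ and the nested tails — is defined on $\spec(A)$ is used, invoking the fact (from \cite{Higham}) that the value of a matrix function is independent of the representation, so that the block-recursive construction of $\Sigma_0$ really does yield $g_n(A)$ rather than merely a formal object. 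A clean way to handle this uniformly, avoiding a case analysis over which eigenvalues might coincide with zeros of intermediate numerators, is to argue first for $A$ replaced by a scalar $z$ ranging over a neighbourhood of $\spec(A)$ where everything is defined — there \cref{thm:trid} and \cref{lem:UDL} give the scalar identity — and then pass to $A$ by the standard fact that two rational functions agreeing on $\spec(A)$ give the same matrix. Once that reduction is in place, the remaining steps are the routine triangular-elimination computation sketched above.
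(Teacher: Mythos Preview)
Your proposal is correct and follows essentially the same route as the paper: apply the UDL decomposition of \cref{lem:UDL} to $T_n(A)$, use the block unitriangularity of $L^{-1}$ and $U^{-1}$ to strip them away on the left and right, and identify $\Sigma_0^{-1}$ with $g_n(A)^{-1}=r(A)$.

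One remark on the technicality you flagged. Your direct claim that all intermediate $\Sigma_i$ are invertible whenever $g_n$ is defined on $\spec(A)$ is not quite right as stated: it can happen that some $b_i(\lambda)+t_i(\lambda)$ vanishes (or is undefined) at an eigenvalue $\lambda$ while the full continued fraction $g_n(\lambda)$ still has a finite nonzero value, so the matrix recursion for the $\Sigma_i$ may genuinely break down for the given $A$. You correctly identify this as the obstacle and propose a continuity workaround. The paper does exactly this, but with a concrete device: it perturbs $A$ to $A_\varepsilon = V(1+\varepsilon)JV^{-1}$ via the Jordan form, so that for small $\varepsilon>0$ the finite set $\spec(A_\varepsilon)$ avoids the finitely many zeros of the rational functions $s_i(z)=\Sigma_i$; the identity then holds for $A_\varepsilon$ and passes to $A$ in the limit $\varepsilon\to 0$. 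Your suggested alternative (establish the scalar identity and transfer it to $A$ as an equality of rational matrix functions) achieves the same end.
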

\begin{proof}
	We first assume that for the given matrix $A$ the UDL decomposition  $T_n(A)^{-1} = L^{-1}D^{-1}U^{-1}$ of \cref{lem:UDL} exists, i.e., that all Schur complements $\Sigma_i$ are nonsingular. Then, using $(e_1\otimes v) = (e_1\otimes I)v$,
	\begin{equation*}
		(\transpose{e}_1\otimes I) T_n(A)^{-1} (e_1\otimes v) = (\transpose{e}_1\otimes I) L^{-1}D^{-1}U^{-1} (e_1\otimes I) v.
	\end{equation*}
	Analogously to \cref{cor:freedom}, since $(\transpose{e}_1\otimes I) L^{-1} = \transpose{e}_1\otimes I$, $U^{-1} (e_1\otimes I) = e_1\otimes I$, we can simplify this to
	\begin{equation*}
		(\transpose{e}_1\otimes I) L^{-1} D^{-1} U^{-1} (e_1\otimes I) = 
		(\transpose{e}_1\otimes I) D^{-1} (e_1\otimes I) = \Sigma_0^{-1}.
	\end{equation*}
	By construction, $\Sigma_0 = g_n(A) = r(A)^{-1}$, thus giving \cref{eq:rational_func_TA}.
	
	For a general matrix $A$, the theorem now follows by a continuity argument: The Schur complements $\Sigma_i$ are
	rational matrix functions $s_i(A)$, with $s_i$ defined by the entries of $T_n(z)$, evaluated at the matrix $A$.  The 
	Schur complements are thus nonsingular if $s_i(\lambda) \neq 0$ for all $\lambda \in \spec(A)$ and 
	$i=0,\ldots,n$. Let $A = VJV^{-1}$ be the Jordan canonical form of $A$, take $D_\varepsilon = (1+\varepsilon) I$ with 
	$\varepsilon >0$ and consider $A_\varepsilon = VD_\varepsilon JV^{-1}$. Then $\spec(A_\varepsilon) = \{(1+\epsilon)\lambda,\; \lambda \in \spec(A)\}$. The set of all zeros of all the 
	rational functions $s_i(z)$ is finite and so is $\spec(A)$. This implies that for $\varepsilon>0$ sufficiently small the 
	set $\spec(A_\varepsilon)$ is disjoint from the set of all zeros of all $s_i$. For such $\varepsilon$, from what we have 
	already shown, we know that \cref{eq:rational_func_TA} holds for $A_\varepsilon$, and letting $\varepsilon \to 0$ shows that \cref{eq:rational_func_TA} also holds for $A$.
\end{proof}
\begin{remark}
	If the approximants $g_n$ of $g(z)$ yield Padé approximations for each $n$, we can increase the degree of the Pad\'e approximation by simply appending $k$ block rows and columns to $T_n(A)$ which yields $T_{n+k}(A)$.
\end{remark}

\subsection{Eigendecomposition}
Assume that $A \in \complns^{m \times m}$ is diagonalizable with the eigendecomposition
\begin{equation*}
	AW = W\Lambda,
\end{equation*}
where the columns $w_i$ of $W$ are the eigenvectors and the diagonal elements $\lambda_{i}$ of the diagonal matrix $\Lambda$ the corresponding eigenvalues.
Then it is easy to find the eigendecomposition of the general CF-matrix $T_n(A)$: For any vector $v \in \complns^{n+1}$ we have
\begin{align*}
    \MoveEqLeft T_n(A) (v \otimes w_i) \\
        &= \sum_{j=0}^{\ell} (T_n^{(j)} \otimes A^j)(v \otimes w_i) = \sum_{j=0}^{\ell}   T_n^{(j)}  v \otimes A^j w_i = \left(\sum_{j=0}^{\ell} \lambda_i^j T_n^{(j)} v \right) \otimes w_i \\
        &= \left( T_n(\lambda_i) v \right) \otimes w_i.
\end{align*}
Thus, for each $i$, if $v$ is an eigenvector of $T_n(\lambda_i)$, then $v \otimes w_i$ is an eigenvector of $T_n(A)$. As a consequence, if each of the matrices $T_n(\lambda_i)$ is diagonalizable with eigenvectors $v_{k,i}$  and eigenvalues $\mu_{k,i}$, then the $(n+1)m$ vectors $v_{k,i} \otimes w_i$ represent a full system of eigenvectors for $T_n(A)$ with eigenvalues $\mu_{k,i}$.

\section{Special CF-matrices}\label{sec:simple}
Up until now, the polynomials that appear in the continued fraction were allowed to be of arbitrarily high degree, and thus, expressing $T_n(A)$ as in \cref{eq:5d_sum}, might involve many terms. In many cases, however, we only need the first two terms. 
In this section, we now assume that $\Tfive(z) = \Tfive^{(0)} - z\Tfive^{(1)}$, such that
\begin{equation*}
	\Tfive(A) = \Tfive^{(0)}\otimes I - \Tfive^{(1)}\otimes A
\end{equation*}
with tridiagonal matrices $\Tfive^{(0)}, \Tfive^{(1)} \in \complns^{(n+1)\times(n+1)}$. We take a closer look at the linear system 
\begin{equation}
\label{eq:5d_linearsystem}
	\Tfive(A)x = e_1 \otimes v,
\end{equation}
the solution of which gives $g_n(A)^{-1}v$ according to \cref{thm:5d_linear}.


\subsection{Construction}
To illustrate the connection between the matrices $\Tfive^{(0)}$, $\Tfive^{(1)}$ and the partial numerators and denominators of the underlying continued fraction, we discuss three special cases.
\begin{example}[Regular C-fractions]
	In the approximant of a regular C-fraction
	\begin{equation*}
		g_n(z) = b_0 + \contf{i=1}{n}{c_iz}{1},
	\end{equation*}
	the partial denominators are all 1. For the numerators we use $-c_iz = (-1)\cdot(c_iz)$ and obtain
	\begin{equation*}
		\Tfive^{(0)} = \begin{bmatrix}
				b_0 & 1 \\
					& 1	& \ddots \\
					&	&\ddots & 1 \\
					&	&	& 1
			\end{bmatrix}, \quad
		\Tfive^{(1)} = \begin{bmatrix}
				0 	&  \\
				c_1 & \ddots&  \\
					& \ddots&\ddots & \\
					&		& c_n 	& 0
			\end{bmatrix}.
	\end{equation*}
	Here, we chose the subdiagonal to contain the coefficients $c_i$, but we could as well choose the superdiagonal, i.e., we could take the pair $(\Tfive^{(0)})^{\T}$, $(\Tfive^{(1)})^{\mathsf{T}}$ instead of $\Tfive^{(0)}$, $\Tfive^{(1)}$.
\end{example}
\begin{example}[Contracted regular C-fractions]\label{ex:contracted}
	Assume we contract the regular C-fraction before constructing the CF-matrix. Recall that by  \cref{eq:C_frac_contracted} the approximants of the contracted partial fraction are given by 
	\begin{equation*}
		\widetilde{g}_{n}(z) = b_0 + \frac{c_1 z}{1 + c_2 z + \contf{i=2}{n}{-c_{2i-2}c_{2i-1} z^2}{1 + (c_{2i}+c_{2i-1})z}}.
	\end{equation*}
	In this continued fraction, monomials of degree 2 appear in the partial numerators. 
	Factorizing $c_{2i-2}c_{2i-1} z^2 = (c_{2i-2}z)(c_{2i-1} z)$, we see that a CF-matrix can be constructed as $\Tfive(z) = \Tfive^{(0)} - z \Tfive^{(1)} $ with 
	\begin{align*}
		\Tfive^{(0)} &= \begin{bmatrix}
				b_0 & -1 \\
					& 1	&\\
					&		& \ddots	& \\
					&		& 			& 1 
			\end{bmatrix},\\
		\Tfive^{(1)} &= (-1)\cdot\begin{bmatrix}
				0 	&  0\\
				c_1 & c_2 	& c_2 \\
					& c_3 	& c_3+c_4	& c_4 \\
					&		& c_5 		& c_5+c_6	& \ddots \\
					&		&			& \ddots 	& \ddots & c_{2n-2} \\
					&		&			&			& c_{2n-1}& c_{2n-1}+c_{2n}
			\end{bmatrix}.
	\end{align*}
	Since the approximant $\widetilde{g}_{n}(z)$ of the contracted C-fraction is the same rational function as $g_{2n}(z)$ of the original C-fraction,
	using the contracted form for the same diagonal Padé approximation reduces the size of the CF-matrix from $2n+1$ to $n+1$.
\end{example}
\begin{example}[Continued fractions via polynomial long division]
	Let us consider an approximant of a continued fraction obtained by repeated polynomial long division
	\begin{equation*}
		g_n(z) = b_0(z) + \contf{i=1}{n}{1}{b_i(z)},
	\end{equation*}
	and assume that all quotients have degree at most 1, i.e., $b_i(z) = b_i^{(0)} - b_i^{(1)}z$. Then a possible construction for $\Tfive^{(0)}, \Tfive^{(1)}$ is
	\begin{equation*}
		\Tfive^{(0)} = \begin{bmatrix}
				b_0^{(0)} 	& -1 \\
						1	& \ddots&\ddots \\
							&\ddots	&\ddots & -1 \\
							&		& 1	& b_n^{(0)}
			\end{bmatrix}, \quad
		\Tfive^{(1)} = \begin{bmatrix}
				b_0^{(1)} 	&  \\
					& \ddots&  \\
					&		& b_n^{(1)}
			\end{bmatrix}.
	\end{equation*}
	In this case, it is possible to transform $\Tfive^{(0)}$ and $\Tfive^{(1)}$ along the lines of
	\cref{cor:freedom} such that they are symmetric. For instance, with the unitary diagonal matrices
	\begin{align*}
		D_\mathrm{L} &= \diag{(-1)^{\floor{0/2}}, \dots, (-1)^{\floor{n/2}}},\\
		D_\mathrm{R} &= \diag{(-1)^{\floor{(0+1)/2}}, \dots, (-1)^{\floor{(n+1)/2}}}
	\end{align*}
	we obtain
	\begin{align*}
		D_\mathrm{L} \Tfive^{(0)} D_\mathrm{R} &= \begin{bmatrix}
				(-1)^0 b_0^{(0)}& 1 \\
						1		&(-1)^1 b_1^{(0)}	&\ddots \\
								&\ddots				&\ddots	& 1 \\
								&					& 1		& (-1)^n b_n^{(0)}
			\end{bmatrix},\\
		D_\mathrm{L} \Tfive^{(1)} D_\mathrm{R} &= \diag{(-1)^0 b_0^{(1)}, \dots, (-1)^n b_n^{(1)}}
	\end{align*}
	or, alternatively,
	\begin{align*}
		D_\mathrm{R} \Tfive^{(0)} D_\mathrm{L} &= \begin{bmatrix}
				(-1)^0 b_0^{(0)}& -1 \\
						-1		&(-1)^1 b_1^{(0)}	&\ddots \\
								&\ddots				&\ddots	& -1 \\
								&					& -1		& (-1)^n b_n^{(0)}
			\end{bmatrix},\\
		D_\mathrm{R} \Tfive^{(1)} D_\mathrm{L} & = D_\mathrm{L} \Tfive^{(1)} D_\mathrm{R}.
	\end{align*}
\end{example}

Let us remark that these examples illustrate that the matrix $\Tfive^{(1)}$ is often singular.


\subsection{Weierstrass canonical form and partial fraction expansion}
If we have $\det(T_n(z)) \not\equiv 0$, i.e., the determinant does not vanish identically as a function of $z$, then the \emph{Weierstrass canonical form}\footnote{The Weierstrass canonical form is a special case of the \emph{Kronecker canonical form} for regular pencils.} exists, see \cite{Gantmacher}.
That is, there exist nonsingular matrices $U,V \in \complns^{(n+1) \times (n+1)}$ such that 
\begin{equation*}
    U(T_n^{(0)} - zT_n^{(1)})V = 
     \begin{bmatrix}
         J^{(0)} - zI_{n^{(0)}} \\
         & I_{n^{(1)}} - zJ^{(1)}
     \end{bmatrix},
\end{equation*}
with the Jordan matrices
\begin{equation} \label{eq:Kronecker_form}
\left\{ \begin{array}{rcl}
    J^{(0)} &=& \bigoplus_{j=1}^{k_0} J(\tau_j,n_j^{(0)}),  \enspace \tau_j \in \complns, \\
    J^{(1)} &=& \bigoplus_{j=1}^{k_1} J(0,n_j^{(1)}),
\end{array} \right.
\end{equation}
where $n^{(i)} = \sum_{j=1}^{k_i} n_j^{(i)}$, $n^{(0)}+n^{(1)} = n+1$
and $J(\mu,m)$ denotes a Jordan block of size $m$ given as
\[
J(\mu,m) = \mu I_m + S_{m}, \enspace S_{m} = \begin{bmatrix} 0 & 1  & 0 & \cdots &0 \\
                            \vdots  & \ddots & \ddots & \ddots & 0 \\
                            \vdots&   & \ddots   &  \ddots & 0  \\
                            \vdots &    &     & 0 & 1 \\
                            0 & \cdots & \cdots & 0   & 0
            \end{bmatrix}, \enspace I_{m}, S_{m}  \in \complns^{m \times m}.
\]
The $\tau_i$ are the (generalized) eigenvalues of the pencil $T_n^{(0)}-zT_n^{(1)}$. With the Weierstrass canonical form, we are able to describe the partial fraction expansion of $g_n(z)^{-1}$ via the pencil $T_n^{(0)}-zT_n^{(1)}$ as we show in the following theorem. In particular, the eigenvalues of $T_n^{(0)}-zT_n^{(1)}$ are the poles of $g_n(z)^{-1}$.
%
\begin{theorem} \label{thm:5d_pfe}
Assume that $\det(T_n(z)) \not\equiv 0$. Let $U,V$ and $J(\tau_j,n_j^{(0)})$, $J(0, n_j^{(1)})$
be the matrices and parameters of the Weierstrass canonical form \cref{eq:Kronecker_form}. Let $u = Ue_1$ and $\transpose{v}= \transpose{e}_1V$, and let $u^{(j)}$ and $(v^{(j)})^\T$ denote the blocks of $u$ and $\transpose{v}$ corresponding to block $j$ of the Weierstrass canonical form. 
Then
\begin{equation*}
    \transpose{e}_1 T_n(z)^{-1} e_1 = \sum_{j=1}^{k_0}\sum_{i=0}^{n_j^{(0)}-1} \frac{-\omega_{j,i}}{(z-\tau_j)^{i+1}} + \sum_{j=1}^{k_1}\sum_{i=0}^{n_j^{(1)}-1} z^{i} \sigma_{j,i}
\end{equation*}
where
\begin{align*}
    \omega_{j,i} &= (v^{(j)})^\T \big(S_{n_j^{(0)}}\big)^i u^{(j)},\\
    \sigma_{j,i} &= (v^{(k_0+j)})^\T \big(S_{n_j^{(1)}}\big)^i u^{(k_0+j)}.
\end{align*}
\end{theorem}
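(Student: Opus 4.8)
The plan is to invert the pencil $T_n^{(0)} - z T_n^{(1)}$ directly in its Weierstrass canonical form and then read off the partial fraction expansion block by block. Since $\det(T_n(z)) \not\equiv 0$, the pencil is regular and \cref{eq:Kronecker_form} applies. Taking inverses there gives, for all $z$ outside a finite exceptional set,
\[
T_n(z)^{-1} = V \begin{bmatrix} (J^{(0)} - zI_{n^{(0)}})^{-1} & \\ & (I_{n^{(1)}} - zJ^{(1)})^{-1} \end{bmatrix} U .
\]
Multiplying on the left by $\transpose{e}_1$ and on the right by $e_1$ and using $u = Ue_1$, $\transpose{v} = \transpose{e}_1 V$ turns $\transpose{e}_1 T_n(z)^{-1} e_1$ into $\transpose{v}\,(\text{block diagonal})\,u$, which decomposes as a sum of scalar terms $(v^{(b)})^\T (\,\cdot\,)_b\, u^{(b)}$, one for each Jordan block $b$ of $J^{(0)}$ and of $J^{(1)}$, with $u^{(b)}$ and $(v^{(b)})^\T$ the corresponding blocks of $u$ and $\transpose{v}$.

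The next step is to invert a single Jordan block by a terminating Neumann series. For a finite block I would write $J(\tau_j, n_j^{(0)}) - zI = (\tau_j - z)I + S_{n_j^{(0)}}$ with $S_{n_j^{(0)}}$ nilpotent of index $n_j^{(0)}$, so that
\[
\big((\tau_j - z)I + S_{n_j^{(0)}}\big)^{-1} = \sum_{i=0}^{n_j^{(0)}-1} \frac{(-1)^i}{(\tau_j - z)^{i+1}}\, S_{n_j^{(0)}}^i = \sum_{i=0}^{n_j^{(0)}-1} \frac{-1}{(z-\tau_j)^{i+1}}\, S_{n_j^{(0)}}^i ,
\]
the last equality being the elementary sign identity $(-1)^i(\tau_j - z)^{-(i+1)} = -(z-\tau_j)^{-(i+1)}$. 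Sandwiching this between $(v^{(j)})^\T$ and $u^{(j)}$ produces exactly the first double sum of the theorem with $\omega_{j,i} = (v^{(j)})^\T (S_{n_j^{(0)}})^i u^{(j)}$. For an infinite block, $I_{n_j^{(1)}} - z J(0, n_j^{(1)}) = I - z S_{n_j^{(1)}}$, and the analogous series $(I - zS_{n_j^{(1)}})^{-1} = \sum_{i=0}^{n_j^{(1)}-1} z^i S_{n_j^{(1)}}^i$ yields, after sandwiching, the polynomial part with $\sigma_{j,i} = (v^{(k_0+j)})^\T (S_{n_j^{(1)}})^i u^{(k_0+j)}$. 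Summing over all blocks gives the claimed identity, which, being an identity of rational functions, then holds wherever both sides are defined.

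I do not expect a genuine obstacle here; the argument is essentially bookkeeping. The points that need care are keeping the block partition of $u$ and $\transpose{v}$ consistent with the ordering in \cref{eq:Kronecker_form} (the $k_0$ finite Jordan blocks first, then the $k_1$ infinite ones), the sign accounting that converts powers of $\tau_j - z$ into powers of $z - \tau_j$, and the degenerate cases $n^{(0)} = 0$ or $n^{(1)} = 0$, in which one of the two sums is simply empty. Because the whole computation is an exact inversion producing an identity between rational functions, no separate continuity or limiting argument is required here, in contrast to the proof of \cref{thm:5d_linear}.
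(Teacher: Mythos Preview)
Your proposal is correct and follows essentially the same route as the paper: invert the Weierstrass canonical form, expand each Jordan block via the terminating Neumann series $(\mu I + S_m)^{-1} = \sum_{i=0}^{m-1}(-1)^i\mu^{-(i+1)}S_m^i$, and read off the coefficients after sandwiching with the block components of $u$ and $v^\T$. The sign bookkeeping and the treatment of finite versus infinite blocks match the paper's argument almost line for line.
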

\begin{proof} We have
\begin{equation} \label{eq:Tninv11}
    e_1^\T T_n(z)^{-1}e_1 = \underbrace{e_1^\T V}_{=v^\T} \left((J^{(0)} - zI)^{-1} \oplus (-zJ^{(1)} + I)^{-1} \right) \underbrace{Ue_1}_{=u}.
\end{equation}
Since for any Jordan block $J(\mu,m)$ we have 
\[
J(\mu,m)^{-1} = \sum_{i=0}^{m-1} \frac{(-1)^i}{\mu^{i+1}} \big(S_{m}\big)^i = \sum_{i=0}^{m-1} \frac{-1}{(-\mu)^{i+1}} \big(S_{m}\big)^i,
\]
this gives 
\begin{equation*}
    (J^{(0)} - zI)^{-1} = \bigoplus_{j=1}^{k_0} \sum_{i=0}^{n_j^{(0)}-1} \frac{-1}{(z-\tau_j)^{i+1}} \big(S_{n_j^{(0)}}\big)^i
\end{equation*}
and similarly
\begin{equation*}
    (-z J^{(1)} + I)^{-1} = -z^{-1}(J^{(1)} - z^{-1}I)^{-1} = \bigoplus_{j=1}^{k_1} \sum_{i=0}^{n_j^{(1)}-1} z^{i} \big(S_{n_j^{(1)}}\big)^i.
\end{equation*}
Inserting the last two equalities into \cref{eq:Tninv11} gives
\begin{align*}
    \transpose{e}_1 T_n(z)^{-1} e_1 =& \sum_{j=1}^{k_0}\sum_{i=0}^{n_j^{(0)}-1} \frac{-1}{(z-\tau_j)^{i+1}} \underbrace{(v^{(j)})^\T \big(S_{n_j^{(0)}}\big)^i u^{(j)}}_{=\omega_{j,i}} \\
    {}& + \sum_{j=1}^{k_1}\sum_{i=0}^{n_j^{(1)}-1} z^{i} \underbrace{(v^{(k_0+j)})^\T \big(S_{n_j^{(1)}}\big)^i u^{(k_0+j)}}_{=\sigma_{j,i}}. \qedhere
\end{align*}
\end{proof}

We note that a similar result was given in \cite{Berljafa} under the additional assumption that $T_n^{(1)}$ is nonsingular if there are higher-order poles.

\begin{remark}
    The coefficients $\omega_{j,i}$ and $\sigma_{j,i}$ are essentially determined by the vectors $u = Ue_1$ and $\transpose{v}= \transpose{e}_1V$. By choosing vectors other than $\transpose{e}_1$ and $e_1$, one would still obtain a rational function with the same poles and same multiplicities. Moreover, the polynomial part would have degree at most $\max_j n_j^{(1)}-1$.
\end{remark}

\subsection{Generalized Sylvester equation}
There is an interesting connection of the linear system $T_n(A)x = e_1 \otimes v$ to Sylvester-type matrix equations which we shortly discuss here. Let us denote by $\vectorize{}$ the operator which maps a matrix to the vector obtained by stacking its columns from left to right. Then,
see for example \cite{vanLoan,vanLoan2}, for compatible matrices $A,X$ and $B$, we have 
\begin{equation*}
	\vectorize{AXB} = (\transpose{B}\otimes A)\vectorize{X}.
\end{equation*}
Because of this, the {\em Sylvester equation}  
\begin{equation*}
	AX + XB = C .
\end{equation*}
is equivalent to the linear system 
\begin{equation*}
	(I \otimes A + \transpose{B} \otimes I)\vectorize{X} = \vectorize{C}.
\end{equation*}
This is the basis for the following corollary.

\begin{corollary} \label{cor:matrix_equation}
	The linear system \cref{eq:5d_linearsystem}	with $\Tfive(A) = \Tfive^{(0)}\otimes I - \Tfive^{(1)}\otimes A$ is equivalent to the matrix equation
	\begin{equation} \label{eq:Sylvester_type}
		X(\Tfive^{(0)})^{\T} - AX(\Tfive^{(1)})^{\T} = v \transpose{e}_1 \quad \mbox{where } x = \vectorize{X}.
	\end{equation}
\end{corollary}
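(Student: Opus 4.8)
The plan is to read the statement off directly from the vectorization identity $\vectorize{AXB} = (\transpose{B}\otimes A)\vectorize{X}$ recalled just above, applied termwise to the two Kronecker summands of $\Tfive(A)$. First I would fix the shape: since $\Tfive(A)\in\complns^{(n+1)m\times(n+1)m}$ and $x\in\complns^{(n+1)m}$, the only consistent reshaping is $x = \vectorize{X}$ with $X\in\complns^{m\times(n+1)}$. Applying the identity with left factor $I_m$ and $B = (\Tfive^{(0)})^{\T}$ gives $(\Tfive^{(0)}\otimes I)\vectorize{X} = \vectorize{X(\Tfive^{(0)})^{\T}}$, and with left factor $A$ and $B = (\Tfive^{(1)})^{\T}$ gives $(\Tfive^{(1)}\otimes A)\vectorize{X} = \vectorize{AX(\Tfive^{(1)})^{\T}}$.

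Next I would rewrite the right-hand side in the same form: $e_1\otimes v = \vectorize{v\transpose{e}_1}$, because $v\transpose{e}_1$ is the $m\times(n+1)$ matrix whose first column equals $v$ and whose remaining columns vanish, so stacking its columns reproduces exactly the vector $e_1\otimes v$. Combining the three observations and using linearity of $\vectorize{}$, the system $\Tfive(A)x = e_1\otimes v$ becomes $\vectorize{X(\Tfive^{(0)})^{\T} - AX(\Tfive^{(1)})^{\T}} = \vectorize{v\transpose{e}_1}$; since $\vectorize{}$ is a linear bijection from $\complns^{m\times(n+1)}$ onto $\complns^{(n+1)m}$, this is equivalent to $X(\Tfive^{(0)})^{\T} - AX(\Tfive^{(1)})^{\T} = v\transpose{e}_1$, which is \cref{eq:Sylvester_type}.

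There is essentially no obstacle here; the only point requiring care is the bookkeeping of transposes and rectangular shapes — in particular remembering that $(C\otimes D)\vectorize{X} = \vectorize{DX\transpose{C}}$ places the \emph{transpose} of the left Kronecker factor on the right of $X$. As a sanity check one could instead argue blockwise, verifying that the $k$-th block row of $\Tfive(A)x = e_1\otimes v$ coincides with the $k$-th column of the matrix equation, but the vectorization identity makes this redundant.
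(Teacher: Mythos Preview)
Your proposal is correct and follows exactly the approach the paper intends: the paper does not spell out a proof but simply states that the vectorization identity $\vectorize{AXB} = (\transpose{B}\otimes A)\vectorize{X}$ ``is the basis for the following corollary,'' and you have filled in precisely those details. The shape bookkeeping and the identification $e_1\otimes v = \vectorize{v\transpose{e}_1}$ are handled correctly.
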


Solution methods for Sylvester equations have been an active research area in recent years. See, e.g., \cite{Simoncini16}, for a review. Consequently, it might be worthwhile to adapt such methods to \cref{eq:Sylvester_type}---an idea that, however, we do not develop any further here.

\subsection{Block Jacobi and block Gauß-Seidel}
If one wants to compute $r(A)v$, one might use its partial fraction expansion
(see \cref{thm:5d_pfe}) to get
\begin{equation}\label{eq:pfe}
    r(A)v = -\sum_{j=1}^{k_0}\sum_{i=0}^{n_j^{(0)}-1} \omega_{j,i}(A-\tau_jI)^{-(i+1)}v + \sum_{j=1}^{k_1}\sum_{i=0}^{n_j^{(1)}-1} \sigma_{j,i} A^{i}v.
\end{equation}
This involves solving the shifted systems
\begin{equation*}
     (A-\tau_jI)^{i+1}x = v.
\end{equation*}

We will now show that for the CF-system~\cref{eq:5d_linearsystem}, block iterative methods like block 
Jacobi and block Gauß-Seidel with a specific choice for the blocks can be expected to expose the 
same convergence properties as their nonblock versions on the shifted systems arising from the partial fraction expansion.

As a preparation, we introduce two theorems about the eigenvalues of pencils. 
We use the standard terminology for eigenvalues of a pencil as, e.g., in \cite{Stewart}: The spectrum $\spec(T-z\tilde{T})$ of a pencil $T-z\tilde{T}$ are all $\lambda \in \complns$ for which there exists a nonzero eigenvector $x$ such that $Tx - \lambda \tilde{T}x = 0$. In addition, 
the pencil has an eigenvalue at infinity if $\tilde{T}$ is singular.
\begin{theorem}\label{thm:ews_pencils_diag}
	Let the Weierstrass canonical form of the regular pencil $T^{(0)} -z T^{(1)} \in \complns^{(n+1)\times(n+1)}$ be diagonal, i.e., $k_0 = n^{(0)}$, $k_1 = n^{(1)}$ and
    \begin{equation*}
        U(T^{(0)} - zT^{(1)})V =
    	\underbrace{\begin{bmatrix}
             D\\
             & I_{n^{(1)}}
        \end{bmatrix}}_{K^{(0)}} - z
     	\underbrace{\begin{bmatrix}
             I_{n^{(0)}} \\
             & 0
        \end{bmatrix}}_{K^{(1)}}
        \enspace \mbox{ with } D = \diag{\tau_1,\ldots,\tau_{k_0}}.
    \end{equation*}
    Furthermore, define the following matrices:
    \begin{align*}
        T &= T^{(0)} \otimes I - T^{(1)} \otimes A, \\
        \widetilde{T} &= T^{(0)} \otimes I - T^{(1)} \otimes \widetilde{A}, \enspace \mbox{ with } A, \widetilde{A} \in \complns^{m\times m}.
    \end{align*}
    Then the following holds:
    \begin{enumerate}[(i)]
        \item $\spec(T-z\tilde{T}) =   \begin{cases}
                                \bigcup_{i=1}^{k_0} \spec\big((\tau_i I - A) - z(\tau_i I - \tilde{A})\big) & \text{if } n^{(1)} = 0,\\
                                \bigcup_{i=1}^{k_0} \spec\big((\tau_i I - A) - z(\tau_i I - \tilde{A})\big) \cup \{1\} & \text{if } n^{(1)} > 0.
                            \end{cases}$
        \item The pencil $T-z\tilde{T}$ has an eigenvalue at infinity if and only if one of the pencils $(\tau_\alpha I - A) - z(\tau_\alpha I - \tilde{A})$ has an eigenvalue at infinity.
        \item If $w$ is an eigenvector of the pencil $(\tau_\alpha I - A) - z(\tau_\alpha I - \tilde{A})$ with eigenvalue $\lambda$, then there exists a vector $v\neq 0$ such that $v\otimes w$ is an eigenvector of $T-z\tilde{T}$ with eigenvalue $\lambda$.
    \end{enumerate}
\end{theorem}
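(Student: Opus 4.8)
The plan is to exploit the block structure of the Weierstrass canonical form after tensoring with $A$ and $\widetilde{A}$. First I would apply $U\otimes I$ on the left and $V\otimes I$ on the right to both $T$ and $\widetilde{T}$; since these are invertible, they do not change the spectrum of the pencil. Because $K^{(0)} = U T^{(0)} V$ and $K^{(1)} = U T^{(1)} V$ are block diagonal with the two diagonal blocks $D \oplus I_{n^{(1)}}$ and $I_{n^{(0)}} \oplus 0$, the transformed pencil $(U\otimes I)(T - z\widetilde{T})(V\otimes I)$ splits as a direct sum over the $n^{(0)}+n^{(1)}$ scalar $1\times 1$ diagonal entries of the pencil $K^{(0)} - zK^{(1)}$, each scalar entry $\kappa^{(0)}_{ii} - z\kappa^{(1)}_{ii}$ contributing a block $\kappa^{(0)}_{ii} I - z\kappa^{(1)}_{ii}I \otimes$ (the appropriate shift), i.e., more precisely a block of the form $\kappa^{(0)}_{ii} I_m - \kappa^{(1)}_{ii}A - z(\kappa^{(0)}_{ii} I_m - \kappa^{(1)}_{ii}\widetilde{A})$. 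For the first $k_0$ entries this is exactly $(\tau_i I - A) - z(\tau_i I - \widetilde{A})$; for the remaining $n^{(1)}$ entries, where $\kappa^{(0)} = 1$ and $\kappa^{(1)} = 0$, the block degenerates to $I_m - zI_m$, whose only eigenvalue is $1$ (with multiplicity $m$, and no eigenvalue at infinity since $I_m$ is nonsingular).

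Once this direct-sum decomposition is established, all three claims follow by standard facts about spectra of block-diagonal pencils. For (i), the spectrum of a direct sum of pencils is the union of the spectra, which gives the two cases depending on whether the trivial $I_m - zI_m$ block is present ($n^{(1)}>0$ contributes the extra point $\{1\}$; when $n^{(1)}=0$ there is no such block). For (ii), an eigenvalue at infinity of a block-diagonal pencil occurs precisely when the ``$\widetilde{T}$-side'' matrix is singular, which here means one of the diagonal blocks has a singular $z$-coefficient; the $I_m-zI_m$ blocks never contribute, so this happens if and only if some $\tau_\alpha I - \widetilde{A}$ is singular, i.e., some pencil $(\tau_\alpha I - A) - z(\tau_\alpha I - \widetilde{A})$ has an eigenvalue at infinity. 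For (iii), if $w$ is an eigenvector of $(\tau_\alpha I - A) - z(\tau_\alpha I - \widetilde{A})$ with eigenvalue $\lambda$, then in the transformed coordinates the vector $e_\alpha \otimes w$ (with $e_\alpha$ the $\alpha$-th unit vector in $\complns^{n+1}$) is an eigenvector with eigenvalue $\lambda$; transforming back via $V\otimes I$ gives the eigenvector $(Ve_\alpha)\otimes w$ of $T - z\widetilde{T}$, so one may take $v = Ve_\alpha \neq 0$.

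The only mild subtlety — and the place I would be most careful — is bookkeeping in the transformation step: one must verify that $(U\otimes I)(T^{(0)}\otimes I - T^{(1)}\otimes A)(V\otimes I) = (UT^{(0)}V)\otimes I - (UT^{(1)}V)\otimes A = K^{(0)}\otimes I - K^{(1)}\otimes A$, which is immediate from the mixed-product property $(P\otimes Q)(R\otimes S) = PR\otimes QS$, and then that a block-diagonal $K^{(0)} = \operatorname{diag}(\kappa^{(0)}_{11},\ldots)$ yields $K^{(0)}\otimes I - zK^{(1)}\otimes I$ acting as the genuine direct sum $\bigoplus_i (\kappa^{(0)}_{ii}I_m - z\kappa^{(1)}_{ii}I_m)$, so the whole pencil $T - z\widetilde{T}$ becomes $\bigoplus_i \big((\kappa^{(0)}_{ii}I_m - \kappa^{(1)}_{ii}A) - z(\kappa^{(0)}_{ii}I_m - \kappa^{(1)}_{ii}\widetilde{A})\big)$ after the similarity. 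Everything else is a direct reading-off from this decomposition, so there is no genuine obstacle; the work is purely in setting up the Kronecker algebra cleanly and then invoking the elementary description of spectra (finite and infinite) of direct sums of regular pencils.
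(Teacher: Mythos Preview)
Your proposal is correct and follows essentially the same route as the paper: both transform the pencil via $U\otimes I$ and $V\otimes I$, use the mixed-product property to obtain the block-diagonal pencil $K^{(0)}\otimes I - K^{(1)}\otimes A$ (and its $\widetilde{A}$ counterpart), and then read off (i)--(iii) from the resulting direct sum of $m\times m$ pencils, with the explicit eigenvector $v\otimes w = (Ve_\alpha)\otimes w$ for part (iii). The paper writes out the two inclusions for (i) by hand rather than invoking ``spectrum of a direct sum is the union of spectra,'' but this is purely a presentational difference.
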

\begin{proof}
    Multiplying the eigen equation $Tx = \lambda \tilde{T}x $ with $U \otimes I$ from the left and putting $y = (V^{-1} \otimes I)x$ gives the equivalent equation
    \begin{equation} \label{eq:eig_for_y}
        \big(K^{(0)} \otimes I - K^{(1)} \otimes A\big) y = \lambda \big(K^{(0)} \otimes I - K^{(1)} \otimes \tilde{A}\big) y .
    \end{equation}
    The matrices $K^{(0)} \otimes I - K^{(1)} \otimes A$ and $K^{(0)} \otimes I - K^{(1)} \otimes \tilde{A}$ are both block diagonal with $n+1$ diagonal blocks of size $m$. We use the index $\alpha \in\{1,\ldots,n+1\}$ to denote such a block.

    Now assume that \cref{eq:eig_for_y} holds for some $y \neq 0$ and let $\alpha$ be a block for which $y_\alpha \neq 0$. Then block $\alpha$ in \cref{eq:eig_for_y} reads 
    \begin{align*}
        (\tau_\alpha I - A) y_\alpha &= \lambda(\tau_\alpha I - \tilde{A})y_\alpha  &&\mbox{if } \alpha \leq n^{(0)}, \\
        Iy_\alpha &= \lambda Iy_\alpha  &&\mbox{if } n^{(0)} < \alpha \leq n+1.
    \end{align*}
    This proves that the set to the left is contained in the set to the right in (i).
    For the opposite inclusion, assume that for some $\alpha \in 
    \{1, \ldots, n^{(0)}\}$ we have $(\tau_\alpha I - A) w = \lambda(\tau_\alpha I - \tilde{A})w$ for some vector $w \neq 0$. Then the vector $y$ with block components
    \[
    y_\beta = \left\{ \begin{array}{rl} w & \mbox{if } \beta = \alpha, \\
                                0 &  \mbox{otherwise} \end{array} \right .
    \]
    is nonzero and satisfies \cref{eq:eig_for_y}. This, in passing, proves (iii) because $y=e_\alpha \otimes w$ with $e_\alpha$ being a unit vector gives $x=(Ve_\alpha)\otimes w = v\otimes w$ as an eigenvector of $T-z\tilde{T}$. In case $n^{(0)} < n+1$, for $\alpha \in \{n^{(0)}+1,\ldots,n+1\}$ the same construction with $w$ being just any nonzero vector from $\complns^m$ gives an eigenvector $y$ with eigenvalue 1 in \cref{eq:eig_for_y} which concludes the proof of (i).
    
    Finally for (ii), $\infty$ being an eigenvalue of the pencil $T-z\tilde{T}$ is equivalent to $\tilde{T}$ being singular. Then $K^{(0)}\otimes I - K^{(1)} \otimes \tilde{A}$ is singular which means that for some $\alpha \in \{1, \ldots, n^{(0)}\}$ the diagonal block $\tau_\alpha I - \tilde{A}$ is singular. This in turn implies that $\infty$ is an eigenvalue of the pencil $(\tau_\alpha I - A) - z(\tau_\alpha I - \tilde{A})$.
\end{proof}

\begin{remark}\label{re:diagonal_simple_pfe}
    If and only if the pencil $\Tfive^{(0)} - z\Tfive^{(1)}$ of the CF-matrix $\Tfive$ has a diagonal Weierstrass form, the corresponding partial fraction expansion has only simple poles and its polynomial part is constant, since it then simplifies to
    \begin{equation*}
        \transpose{e}_1 T_n(z)^{-1} e_1 = \sum_{j=1}^{k_0} \frac{\omega_{j,0}}{\tau_j-z} + \sum_{j=1}^{k_1} \sigma_{j,0}.
    \end{equation*}
\end{remark}

\begin{theorem}\label{thm:ews_pencil_gen}
	Let the Weierstrass canonical form of the pencil $T^{(0)} - zT^{(1)}$ in \cref{thm:ews_pencils_diag} not be diagonal, i.e.,
	\begin{equation*}
        U(T^{(0)} - zT^{(1)})V =
    	\begin{bmatrix}
             J^{(0)}\\
             & I_{n^{(1)}}
        \end{bmatrix} - z
     	\begin{bmatrix}
             I_{n^{(0)}} \\
             & J^{(1)}
        \end{bmatrix}.
    \end{equation*}
    Then (i) of \cref{thm:ews_pencils_diag} still holds,
    \begin{equation*}
        \spec(T-z\tilde{T}) =   \begin{cases}
                                    \bigcup_{i=1}^{k_0} \spec\big((\tau_i I - A) - z(\tau_i I - \tilde{A})\big) & \text{if } n^{(1)} = 0,\\
                                    \bigcup_{i=1}^{k_0} \spec\big((\tau_i I - A) - z(\tau_i I - \tilde{A})\big) \cup \{1\} & \text{if } n^{(1)} > 0.
                                \end{cases}
    \end{equation*}
\end{theorem}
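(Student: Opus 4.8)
The plan is to follow the same strategy as in the proof of \cref{thm:ews_pencils_diag}, reducing to the block-diagonal structure via the Weierstrass transformation, and to observe that the only difference now is the presence of nontrivial Jordan blocks in $J^{(0)}$ and $J^{(1)}$. Multiplying the eigenvalue equation $Tx = \lambda \widetilde{T}x$ by $U \otimes I$ from the left and setting $y = (V^{-1}\otimes I)x$ again yields the equivalent equation
\begin{equation*}
    \big(K^{(0)} \otimes I - K^{(1)} \otimes A\big) y = \lambda \big(K^{(0)} \otimes I - K^{(1)} \otimes \widetilde{A}\big) y,
\end{equation*}
but now $K^{(0)} = J^{(0)} \oplus I_{n^{(1)}}$ and $K^{(1)} = I_{n^{(0)}} \oplus J^{(1)}$ are only block \emph{triangular} (with respect to the fine $m\times m$ blocking), not block diagonal. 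The key point is that the pencil $K^{(0)}\otimes I - z K^{(1)}\otimes I$ restricted to a single Jordan block $J(\tau_j, n_j^{(0)})$ of $J^{(0)}$ contributes $(\tau_j I - A)\otimes I_{n_j^{(0)}} + S_{n_j^{(0)}}\otimes(\text{stuff})$, which is block upper triangular with all diagonal blocks equal to $\tau_j I - A$ (resp.\ $\tau_j I - \widetilde{A}$ on the right-hand side); similarly each Jordan block $J(0, n_j^{(1)})$ of $J^{(1)}$ contributes a block triangular pencil with diagonal blocks $I - z\cdot 0 = I$ on the left and $I$ on the right (the nilpotent parts multiply $z$).

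Concretely, I would argue as follows. For the inclusion $\subseteq$: suppose $y \neq 0$ solves the equation. Work within one diagonal Jordan block and look at the \emph{last} nonzero block component $y_\alpha$ inside that Jordan block (ordering the components so that the nilpotent shift $S$ maps component $k$ to component $k-1$). Because the pencil is block triangular, the equation in row $\alpha$ only involves $y_\alpha$ (the shift terms reach components with higher index, which vanish by maximality), so it reads $(\tau_j I - A)y_\alpha = \lambda(\tau_j I - \widetilde{A})y_\alpha$ for a block coming from $J^{(0)}$, or $I y_\alpha = \lambda I y_\alpha$ for a block coming from the $I_{n^{(1)}}$ part, giving $\lambda \in \spec((\tau_j I - A) - z(\tau_j I - \widetilde{A}))$ or $\lambda = 1$, exactly as required. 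For the inclusion $\supseteq$: given $(\tau_i I - A)w = \lambda(\tau_i I - \widetilde{A})w$ with $w \neq 0$, place $w$ in the last component of the Jordan block of $J^{(0)}$ associated with $\tau_i$ and zero everywhere else; since the shift $S$ sends this last component to zero within the block and there is no coupling to other blocks, the resulting $y$ is nonzero and satisfies the equation. The eigenvalue $1$ is obtained similarly when $n^{(1)} > 0$ by placing an arbitrary nonzero $w$ in the last component of a block of $J^{(1)}$ (its nilpotent part multiplies $z$ and annihilates the last component, so both sides reduce to $w = \lambda w$).

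The main obstacle, and the only real difference from \cref{thm:ews_pencils_diag}, is the bookkeeping of the shift/nilpotent terms: one must choose consistently which end of each Jordan block to read off, and verify that the block triangular structure really does decouple the relevant equation. Writing $S_{m}\otimes I_m$ acting on a stacked vector of $m\times 1$ blocks and checking that its action on the "extremal" component is trivial is the crux; once that is in place, the cases split exactly as in the diagonal case. Note that parts (ii) and (iii) of \cref{thm:ews_pencils_diag} are not claimed here, so I would not need to track eigenvectors of $T-z\widetilde{T}$ beyond what the $\supseteq$ construction already gives, nor re-examine the eigenvalue at infinity; the statement only asserts that part (i) carries over verbatim.
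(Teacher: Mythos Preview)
Your strategy is genuinely different from the paper's, and in principle more direct. The paper does \emph{not} exploit block-triangularity; instead it perturbs the pencil $T^{(0)}-zT^{(1)}$ by a small diagonal matrix so that the perturbed Weierstrass form becomes diagonal, applies \cref{thm:ews_pencils_diag} to the perturbed pencil $\Tpert - z\widetilde{\Tpert}$, and then lets the perturbation tend to zero using continuity of generalized eigenvalues. Your route avoids perturbation theory altogether and, once fixed, is the more elementary argument.

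There is, however, a concrete slip in your $\supseteq$ construction. With the paper's convention $S_m e_k = e_{k-1}$ (ones on the superdiagonal), placing $w$ in the \emph{last} block component $y_{m'}$ of a Jordan block $J(\tau_j,m')$ does make row $m'$ read $(\tau_j I - A)w = \lambda(\tau_j I - \widetilde A)w$, but row $m'-1$ then picks up the shift contribution $y_{m'}=w$ and becomes $w=\lambda w$, which fails for $\lambda\neq 1$. The analogous construction in a $J^{(1)}$ block forces $Aw=\widetilde A w$ in row $m'-1$, which is equally unwarranted. The fix is simply to place $w$ in the \emph{first} block component: then $(S_{m'}\otimes I)y=0$ identically, row $1$ yields the scalar pencil equation, and all other rows are $0=0$. (Your $\subseteq$ argument, reading off the \emph{last} nonzero component, is correct as stated.) Even cleaner: after the Weierstrass transformation the matrix $(K^{(0)}\otimes I - K^{(1)}\otimes A)-\lambda(K^{(0)}\otimes I - K^{(1)}\otimes\widetilde A)$ is block upper triangular, so its determinant factors as
\[
\prod_{j=1}^{k_0}\det\!\big((\tau_j I - A)-\lambda(\tau_j I - \widetilde A)\big)^{n_j^{(0)}}\cdot (1-\lambda)^{m\,n^{(1)}},
\]
which gives both inclusions at once without tracking eigenvectors.
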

\begin{proof}
	To circumvent the nontrivial Jordan blocks in $J^{(0)}$, $J^{(1)}$, we perturb the pencil $T^{(0)} - zT^{(1)}$ using the pencil $E - zF$ defined such that
	\begin{alignat*}{2}
		UEV &= \begin{bmatrix}
             \varepsilon E^{(0)}\\
             & 0
         \end{bmatrix}, \quad &E^{(0)} = \delta \cdot &\diag{2^{-1}, \dots, 2^{-n^{(0)}}},\\
        UFV &= \begin{bmatrix}
             0\\
             & \varepsilon F^{(1)}
         \end{bmatrix}, \quad &F^{(1)} = &\diag{2^{-1}, \dots, 2^{-n^{(1)}}}.
	\end{alignat*}
	We now choose $\delta$ such that for $ 0 < \varepsilon \leq 1$
	the diagonal entries of $J^{(0)} + \varepsilon E^{(0)}$ are pairwise distinct. One such choice is
	\begin{equation*}
		\delta = \begin{cases}
					1 & \text{if } k_0 = 1,\\
					\min_{i\neq j} \abs{\tau_i-\tau_j} & \text{if } k_0 > 1.
				 \end{cases}
	\end{equation*}
	As a consequence, the perturbed pencil $(T^{(0)}+E) - z(T^{(1)}+F)$ is diagonalizable, i.e., there exist matrices $M^{(0)}$ and $M^{(1)}$ such that
	\begin{align*}
		M^{(0)} (J^{(0)} + \varepsilon E^{(0)}) (M^{(0)})^{-1} &= \diag{J^{(0)}_{1,1} + \varepsilon E^{(0)}_{1,1}, \dots, J^{(0)}_{n^{(0)},n^{(0)}} + \varepsilon E^{(0)}_{n^{(0)},n^{(0)}}}, \\
		M^{(1)}(J^{(1)} + \varepsilon F^{(1)}) (M^{(1)})^{-1} &= \diag{\varepsilon F^{(1)}_{1,1}, \dots, \varepsilon F^{(1)}_{n^{(1)},n^{(1)}}}.
	\end{align*}

	We now consider the perturbation $\Tpert - z\widetilde{\Tpert}$ of the pencil $T - z\widetilde{T}$ given by
	\begin{gather*}
		\Tpert = T + (E\otimes I - F\otimes A) = (T^{(0)}+E) \otimes I - (T^{(1)}+F) \otimes A, \\
		\widetilde{\Tpert} = \widetilde{T} + (E\otimes I - F\otimes \widetilde{A}) = (T^{(0)}+E) \otimes I - (T^{(1)}+F) \otimes \widetilde{A}.
	\end{gather*}
	Clearly
	\begin{align*}
		\lim_{\varepsilon \to 0} ((T^{(0)}+E) - z(T^{(1)}+F)) &= T^{(0)} - zT^{(1)}, \\
		\lim_{\varepsilon \to 0} (\Tpert - z\widetilde{\Tpert}) &= T - z\widetilde{T}.
	\end{align*}
	Let $\mu_j$ be the eigenvalues of $\Tpert - z\widetilde{\Tpert}$. Then, see \cite[Theorem~2.1]{Stewart} for example, the eigenvalues $\lambda_j$ of $T - z\widetilde{T}$ can be ordered such that
	\begin{equation*}
		\lim_{\varepsilon \to 0} \mu_j = \lambda_j.
	\end{equation*}
	By construction, we can block-diagonalize the pencil $\Tpert - z\widetilde{\Tpert}$ by multiplying with $\left(\begin{bmatrix} M^{(0)} & 0\\0 & M^{(1)} \end{bmatrix} U\right)\otimes I$ from the left and with $\left(V\begin{bmatrix} (M^{(0)})^{-1} & 0\\0 & (M^{(1)})^{-1} \end{bmatrix}\right)\otimes I$ from the right.
	When applied to the eigen equation, we get
    \begin{align*}
        ((J^{(0)}_{\alpha,\alpha} + \varepsilon E^{(0)}_{\alpha,\alpha}) I - A) y_\alpha &= \mu((J^{(0)}_{\alpha,\alpha} + \varepsilon E^{(0)}_{\alpha,\alpha}) I - \tilde{A})y_\alpha  &&\mbox{if } \alpha \leq n^{(0)}, \\
        (I-\varepsilon F^{(1)}_{\alpha,\alpha} A) y_\alpha &= \mu (I-\varepsilon F^{(1)}_{\alpha,\alpha} \widetilde{A}) y_\alpha  &&\mbox{if } \alpha > n^{(0)}.
    \end{align*}
    By analogy with \cref{thm:ews_pencils_diag}, we have
    \begin{align*}
        \spec(\Tpert-z\tilde{\Tpert}) =& 
        \bigcup_{i=1}^{n^{(0)}} \spec\left(((J^{(0)}_{i,i}+\varepsilon E_{i,i}^{(0)}) I - A) - z((J^{(0)}_{i,i}+\varepsilon E_{i,i}^{(0)}) I - \tilde{A})\right) \\
        &\cup
        \bigcup_{i=1}^{n^{(1)}} \spec\left((I - \varepsilon F_{i,i}^{(1)} A) - z(I - \varepsilon F_{i,i}^{(1)}\tilde{A})\right).
    \end{align*}
	In the limit $\varepsilon \to 0$, we retrieve the eigenvalues of the pencils $(J^{(0)}_{\alpha,\alpha} I - A) - z(J^{(0)}_{\alpha,\alpha} I - \widetilde{A})$ for $\alpha \leq n^{(0)}$ and the eigenvalue 1 for $\alpha > n^{(0)}$. Note that there are only $k_0 < n^{(0)}$ different values $\tau_j$ for $J^{(0)}_{\alpha,\alpha}$ because the Weierstrass form is not diagonal. Thus, we obtain the pencils $(\tau_j I - A) - z(\tau_j I - \widetilde{A})$ with $j \leq k_0$.
\end{proof}

Let us now use the above results for an analysis of splitting-based iterative methods for the system \cref{eq:5d_linearsystem}.
We recall that splitting methods for the system $Ax=b$ can be written as
\begin{equation}
\label{eq:smoother}
	x_{i+1} = x_i + \widetilde{A}^{-1} (b-Ax_i)
\end{equation}
where $\widetilde{A}$ is an easily invertible approximation to $A$. For example, $\widetilde{A}$ may be the diagonal of $A$---giving the Jacobi iteration---or the lower triangular part---giving the Gauß-Seidel iteration.
In a splitting method, the errors  $\epsilon_i = x-x_i$ satisfy 
\begin{equation*}
	\epsilon_{i+1} = (I - \widetilde{A}^{-1}A) \epsilon_i 
\end{equation*}
with the error propagator $M = I - \widetilde{A}^{-1}A$.

We now consider a special block version of the general splitting approach for the system $\Tfive(A) x = e_1\otimes v$ where
$\widetilde{T_n(A)}$ has the form
\[
\widetilde{T_n(A)} = T_n(\widetilde{A}) = \Tfive^{(0)}\otimes I - \Tfive^{(1)}\otimes \widetilde{A}.
\]
Depending on the choice for $\widetilde{A}$, the matrix $T_n(\widetilde{A})$ can be interpreted as resulting from a {\em block} splitting of $T_n(A)$.  To see this, we first observe that for a Kronecker product of two matrices $B\in \complns^{k \times k}$ and $C\in \complns^{\ell \times \ell}$, there exists a permutation matrix $P$ such that
\begin{equation*}
    P(B\otimes C)\transpose{P} = C \otimes B.
\end{equation*}
For more details, see, e.g., \cite[eq.~(1.3.5)]{vanLoan}. Thus, 
we have
\begin{equation*}
    P\widetilde{T_n(A)} \transpose{P} = I \otimes \Tfive^{(0)} - \widetilde{A} \otimes \Tfive^{(1)},
\end{equation*}
in which the nonzero blocks, which themselves are at most tridiagonal, are determined by the sparsity of $\widetilde{A}$ (and $I$). Therefore, any $\widetilde{A}$ resulting from a splitting for $A$ induces a block splitting $ PT_n(\widetilde{A}) \transpose{P}$ for  $P T_n(A) \transpose{P}$. 
For instance, if we take $\widetilde{A}$ to be the diagonal of $A$, the matrix $PT_n(\widetilde{A}) \transpose{P}$ is block diagonal and it represents the approximation to $P T_n(A) \transpose{P}$ corresponding to the block Jacobi splitting. Similarly, if $\widetilde{A}$ is the lower triangular part of $A$, we retrieve the block Gauß-Seidel splitting for the (permuted) matrix $T_n(A)$.

In the next theorem, we relate the spectrum of the error propagator $M_{T_n} = I - T_n(\widetilde{A})^{-1}T_n(A)$ for the CF-matrix to the spectra of error propagators $I-(\widetilde{A}-\tau I)^{-1}(A-\tau I)$ where the shifts $\tau$ are the poles of the rational function represented by the continued fraction.

\begin{theorem}\label{thm:smoother_gen} 
    Given $\widetilde{A}$ as an approximation for $A$, assume that for any $\tau \in \complns$ we take
    \[
    \widetilde{A-\tau I} = \widetilde{A}-\tau I.
    \]
    Let $M(\tau) = I - (\widetilde{A-\tau I})^{-1} (A-\tau I)$ and $M_{T_n} = I - \Tfive(\widetilde{A})^{-1}\Tfive(A) $ be the error propagators for the shifted matrices $A-\tau I$ and the CF-matrix $\Tfive(A)$, respectively.

	Then 
	\begin{equation*}
        \spec(M_{\Tfive}) = \begin{cases}
                                \bigcup_{i=1}^{k_0} \spec(M(\tau_i)) & \text{if } n^{(1)} = 0,\\
                                \bigcup_{i=1}^{k_0} \spec(M(\tau_i)) \cup \{0\} & \text{if } n^{(1)} > 0,
                            \end{cases}
	\end{equation*}
	where $\tau_1,\ldots,\tau_{k_0}$ are the eigenvalues of $\Tfive(A)$ from the Weierstrass canonical form; see \cref{eq:Kronecker_form}. 
\end{theorem}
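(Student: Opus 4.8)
The plan is to reduce the claim to the pencil–spectrum results of \cref{thm:ews_pencils_diag} and \cref{thm:ews_pencil_gen} by means of the affine substitution $\nu\mapsto 1-\nu$, which turns eigenvalues of an error propagator $I-\widetilde B^{-1}B$ into generalized eigenvalues of the pencil $B-z\widetilde B$. First I would note that, since $\Tfive(\widetilde A)$ is nonsingular (this is exactly the hypothesis making $M_{\Tfive}$ well defined), a scalar $\nu$ lies in $\spec(M_{\Tfive})$ if and only if $\Tfive(A)x=(1-\nu)\Tfive(\widetilde A)x$ for some $x\neq 0$, i.e.\ if and only if $1-\nu$ is a (necessarily finite) eigenvalue of the pencil $\Tfive(A)-z\,\Tfive(\widetilde A)$. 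Writing $\Tfive(A)=\Tfive^{(0)}\otimes I-\Tfive^{(1)}\otimes A$ and $\Tfive(\widetilde A)=\Tfive^{(0)}\otimes I-\Tfive^{(1)}\otimes\widetilde A$, this is precisely the pencil $T-z\widetilde T$ of \cref{thm:ews_pencils_diag} and \cref{thm:ews_pencil_gen} with $T^{(0)}=\Tfive^{(0)}$ and $T^{(1)}=\Tfive^{(1)}$; the standing assumption $\det(\Tfive(z))\not\equiv 0$ ensures $\Tfive^{(0)}-z\Tfive^{(1)}$ is regular, so its Weierstrass form \cref{eq:Kronecker_form} exists and its finite eigenvalues are the poles $\tau_1,\dots,\tau_{k_0}$ of $r$.

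Applying part (i) of \cref{thm:ews_pencils_diag} when that Weierstrass form is diagonal, and of \cref{thm:ews_pencil_gen} otherwise, gives
\[
\spec\big(\Tfive(A)-z\,\Tfive(\widetilde A)\big)=\begin{cases}
\bigcup_{i=1}^{k_0}\spec\big((\tau_iI-A)-z(\tau_iI-\widetilde A)\big) & \text{if } n^{(1)}=0,\\
\bigcup_{i=1}^{k_0}\spec\big((\tau_iI-A)-z(\tau_iI-\widetilde A)\big)\cup\{1\} & \text{if } n^{(1)}>0.
\end{cases}
\]
Then I would perform the very same substitution one level down: using $\widetilde{A-\tau_iI}=\widetilde A-\tau_iI$ together with $A-\tau_iI=-(\tau_iI-A)$ and $\widetilde A-\tau_iI=-(\tau_iI-\widetilde A)$, one checks that $\nu\in\spec(M(\tau_i))$ if and only if $(\tau_iI-A)w=(1-\nu)(\tau_iI-\widetilde A)w$ for some $w\neq 0$, i.e.\ if and only if $1-\nu\in\spec\big((\tau_iI-A)-z(\tau_iI-\widetilde A)\big)$. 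Chaining the two equivalences, $\nu\in\spec(M_{\Tfive})$ iff $1-\nu$ lies in $\bigcup_{i=1}^{k_0}\spec\big((\tau_iI-A)-z(\tau_iI-\widetilde A)\big)$, together with $\{1\}$ in case $n^{(1)}>0$, which in turn holds iff $\nu$ lies in $\bigcup_{i=1}^{k_0}\spec(M(\tau_i))$, together with $1-1=0$ in case $n^{(1)}>0$. This is the asserted identity.

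I do not expect a genuine obstacle here: the substantive work, namely the block-diagonalization obtained by multiplying with $U\otimes I$ and $(V^{-1}\otimes I)$ and the perturbation-and-limit argument covering the nondiagonal Weierstrass case, has already been done in \cref{thm:ews_pencils_diag} and \cref{thm:ews_pencil_gen}. The only points needing care are pure bookkeeping: keeping the affine map $z=1-\nu$ consistent across both reductions, and checking that the well-definedness hypotheses match up, i.e.\ that $\Tfive(\widetilde A)$ nonsingular is equivalent (by the infinite-eigenvalue characterization, part (ii) of \cref{thm:ews_pencils_diag} and its analogue for a nondiagonal Weierstrass form) to all $\widetilde A-\tau_iI$ being nonsingular, so that all $M(\tau_i)$ are simultaneously defined and the extra spectral value in the case $n^{(1)}>0$ comes out as $0$ and never as $\infty$.
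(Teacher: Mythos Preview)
Your proposal is correct and follows essentially the same approach as the paper: translate eigenvalues of the error propagators into generalized eigenvalues of the corresponding pencils via the affine shift $z=1-\nu$, then invoke \cref{thm:ews_pencil_gen} (and \cref{thm:ews_pencils_diag}) to identify the pencil spectra. Your additional remark about the equivalence between nonsingularity of $\Tfive(\widetilde A)$ and of all $\widetilde A-\tau_iI$ via part~(ii) of \cref{thm:ews_pencils_diag} is a nice bit of bookkeeping that the paper leaves implicit.
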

\begin{proof}
	The eigenvalues of
	\begin{equation*}
		M_{\Tfive} = I - \Tfive(\widetilde{A})^{-1}\Tfive(A)
	\end{equation*}
	are obviously $1-\mu$ where $\mu$ are the eigenvalues of $\Tfive(\widetilde{A})^{-1}\Tfive(A)$. They can be interpreted as the eigenvalues of the pencil $\Tfive(A) - z\Tfive(\widetilde{A})$. \Cref{thm:ews_pencil_gen} tells us that if $n^{(1)} = 0$ they are given by the eigenvalues of the pencils $(\tau_i I - A) - z(\tau_i I - \widetilde{A})$.
	Similarly, the eigenvalues of
	\begin{equation*}
		M(\tau_i) = I - (\widetilde{A}-\tau_iI)^{-1} (A-\tau_iI)
	\end{equation*}
	are given by $1-\mu$ where $\mu$ are the eigenvalues of the pencil $(A-\tau_i I) - z(\widetilde{A}-\tau_i I)$ which are those of the pencil $(\tau_i I - A) - z(\tau_i I - \widetilde{A})$.
	In the case $n^{(1)} > 0$, the same argument as above can be made but in addition $\mu=1$ is possible.
\end{proof}

\Cref{thm:smoother_gen} tells us that the convergence speed for the block splitting method for $\Tfive(A)$, measured by the largest eigenvalue of the error propagator, is the slowest of all convergence speeds for the splitting based methods $(A-\tau_iI)$.

If the partial fraction expansion of the rational function has only simple poles and the polynomial part is constant, we can relate the eigenvectors of the error propagators according to the following corollary.

\begin{corollary}\label{cor:smoother_diag}
	Let the pencil $\Tfive^{(0)} - z\Tfive^{(1)}$ have a diagonal Weierstrass canonical form. Then in addition to \cref{thm:smoother_gen}, 
	\begin{equation*}
		\quad M(\tau_i)w = \lambda w \quad\implies\quad \exists v\neq 0:  M_{\Tfive} (v\otimes w) = \lambda (v\otimes w).
	\end{equation*}
\end{corollary}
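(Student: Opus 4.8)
The plan is to reduce the claim to part~(iii) of \cref{thm:ews_pencils_diag}, passing through the pencil reformulation of the error propagators exactly as in the proof of \cref{thm:smoother_gen}.

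First I would rewrite the hypothesis $M(\tau_i)w = \lambda w$ as a generalized eigenvalue equation. Since $M(\tau_i) = I - (\widetilde{A}-\tau_i I)^{-1}(A-\tau_i I)$ and $\widetilde{A}-\tau_i I$ is nonsingular (it occurs explicitly in $M(\tau_i)$), the relation $M(\tau_i)w=\lambda w$ with $w\neq 0$ is equivalent to $(A-\tau_i I)w = (1-\lambda)(\widetilde{A}-\tau_i I)w$, that is, to $(\tau_i I - A)w = (1-\lambda)(\tau_i I - \widetilde{A})w$. Thus $w$ is an eigenvector of the pencil $(\tau_i I - A) - z(\tau_i I - \widetilde{A})$ with eigenvalue $1-\lambda$.

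Next I would apply part~(iii) of \cref{thm:ews_pencils_diag}. Its hypothesis---that $\Tfive^{(0)} - z\Tfive^{(1)}$ has a diagonal Weierstrass canonical form---is exactly what we assume here, and the $\tau_i$ of the corollary are then precisely the diagonal entries playing the role of $\tau_\alpha$ in that theorem. Part~(iii) therefore provides a vector $v\neq 0$ such that $v\otimes w$ is an eigenvector of the pencil $\Tfive(A) - z\,\Tfive(\widetilde{A})$ with eigenvalue $1-\lambda$, that is, $\Tfive(A)(v\otimes w) = (1-\lambda)\,\Tfive(\widetilde{A})(v\otimes w)$. Finally, since $\Tfive(\widetilde{A})$ is nonsingular (it appears in $M_{\Tfive}$), multiplying by $\Tfive(\widetilde{A})^{-1}$ gives $\Tfive(\widetilde{A})^{-1}\Tfive(A)(v\otimes w) = (1-\lambda)(v\otimes w)$, hence $M_{\Tfive}(v\otimes w) = (v\otimes w) - (1-\lambda)(v\otimes w) = \lambda(v\otimes w)$, which is the assertion.

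The argument is essentially bookkeeping on top of \cref{thm:ews_pencils_diag} and the pencil identities already used for \cref{thm:smoother_gen}, so there is no real obstacle; the only point that needs care is the consistency of shift and sign conventions, namely that the pencil arising naturally from $M(\tau_i)$ is $(A-\tau_i I) - z(\widetilde{A}-\tau_i I) = -\big[(\tau_i I - A) - z(\tau_i I - \widetilde{A})\big]$, the negative of the pencil used in part~(iii) of \cref{thm:ews_pencils_diag}; since an overall nonzero scalar multiple of a pencil leaves its eigenvectors and eigenvalues unchanged, the eigenvector $w$ and eigenvalue $1-\lambda$ transfer verbatim.
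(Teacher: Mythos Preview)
Your proposal is correct and follows essentially the same approach as the paper: both reduce the eigenvector statement to part~(iii) of \cref{thm:ews_pencils_diag} by translating the error-propagator eigen equations into the corresponding pencil eigen equations (with eigenvalue shifted by $1$), and both note that the sign flip between $(A-\tau_i I)-z(\widetilde{A}-\tau_i I)$ and $(\tau_i I-A)-z(\tau_i I-\widetilde{A})$ is immaterial. Your write-up is simply more explicit about each of these bookkeeping steps than the paper's terse version.
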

\begin{proof}
	By analogy with the proof for \cref{thm:smoother_gen}, the identities in the error propagators do not influence the eigenvectors. What remains are the eigenvectors of the pencils $(A-\tau_i I) - z(\widetilde{A}-\tau_i I)$ which are also the eigenvectors of the pencils $(\tau_i I - A) - z(\tau_i I - \widetilde{A})$ on the left side and the eigenvectors of the pencil $\Tfive(A) - z\Tfive(\widetilde{A})$ on the right side. Part (iii) in \cref{thm:ews_pencils_diag} proves the rest.
\end{proof}

We conclude this section by remarking that each diagonal block of the (permuted) CF-matrix is tridiagonal. So their LU-factorizations can be obtained at low cost implying that the cost for performing one block Jacobi or block Gauss-Seidel iteration for the CF-matrix becomes comparable to the accumulated cost for (nonblock) Jacobi or Gauss-Seidel for all shifted systems $A-\tau I$. Our analysis shows that when working with these block iterations ``stand-alone'', we cannot expect to top the approach where we perform the corresponding nonblock iteration on all shifted systems from the partial fraction. We anticipate, however, that our analysis might be helpful when devising a smoother for a multigrid method (see, e.g., \cite{Trottenberg}) for the CF system in future work.

\section{Numerical examples}\label{sec:numerics}
To illustrate the potential of the CF-matrix approach, we now present numerical examples. Emphasis is placed on how the preconditioned CF-matrix compares with the approach of the partial fraction expansion (see \cref{eq:pfe}). All calculations were done in MATLAB R2021a \cite{matlab}.
\begin{example}[Exponential function]
\label{ex:exp}
	First, consider the exponential function
	\begin{equation*}
		f(A)v = \exp(-A)v.
	\end{equation*}
	According to \cite[eq.~(11.1.3)]{Cuyt}, a regular C-fraction for the inverse of the function, $f(z)^{-1} = \exp(z)$, is given by
	\begin{equation*}
		b_0 = 1,\quad c_1 = 1,\quad c_i = \begin{cases}
											\frac{-1}{2(i-1)} & \text{if } i>1 \text{ is even,}\\
											\frac{1}{2i}    & \text{if } i>1 \text{ is odd.}
											\end{cases}
	\end{equation*}
	We construct the CF-matrix from the contracted continued fraction as illustrated in \cref{ex:contracted} for $n=20$ and apply (full) GMRES \cite{Saad_GMRES} to the resulting system $T_{20}(A)x = e_1\otimes v$ without preconditioning and with ILU(0) \cite{vanderVorst77,Saad} preconditioning.
	For comparison, we also calculate the poles $\tau_j$ of the partial fraction expansion by solving the eigenvalue problem of the pencil $T_{20}^{(0)} - z T_{20}^{(1)}$.\footnote{Note that the Padé approximations in this case have simple poles only. Thus, the Weierstrass canonical form is diagonal.} and apply GMRES to the resulting systems $(A-\tau_j I) x_j = v$. We report, for each iteration, the largest relative residual across all shifts, denoted as PFE in our figures.
	
    For $A$, we use two matrices. First, we take the discrete 2D Laplace operator on a square grid with Dirichlet boundary conditions, i.e.,
	\begin{equation*}
	    A = A_0 \otimes I + I \otimes A_0 \enspace \text{ with }	A_0 = \begin{bmatrix}
				2 & -1\\
				-1 & 2 & \ddots\\
				& \ddots & \ddots & -1 \\
				& & -1 & 2
			\end{bmatrix} \in \complns^{100\times 100},
	\end{equation*}
    which results in $m=100^2$. As a less conventional matrix, we second consider a random sparse nonsingular M-matrix by creating a Z-matrix $B$ via \texttt{sprand} in MATLAB and shifting it by its spectral radius plus $0.1$, i.e.\ $A = B+ (\rho(B)+0.01)I$. 
    To make it comparable to the 2D Laplace example, we use the same size $m=100^2$ and a similar density of $5 \cdot 10^{-4}$.
    The vector $v$ is chosen as a random vector via the function \texttt{rand}. The convergence behavior for both matrices is plotted in \cref{fig:exp}.
    
    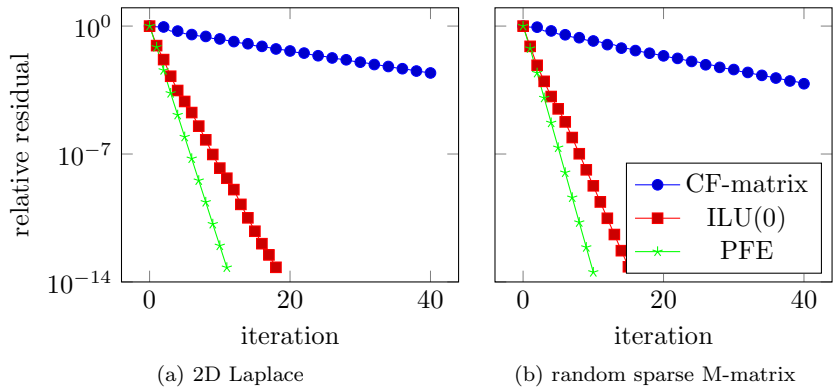
\begin{figure}
        \centering
        \subfloat[2D Laplace]{
        	\begin{tikzpicture}
			\begin{semilogyaxis}[
			    width=0.5\textwidth,
			    xlabel={iteration},
			    ylabel={relative residual},
			    legend pos=south east,
			    ymin=1e-14, ymax=10,
			    ]

				\addplot+[mark repeat=2,] table {plots/exp/laplace_wo.txt};
			    \addlegendentry{CF-matrix}

				\addplot table {plots/exp/laplace_ILU.txt};
			    \addlegendentry{ILU(0)}

                \pgfplotsset{cycle list shift=1}
				\addplot+[green] table {plots/exp/laplace_PFE.txt};
			    \addlegendentry{PFE}
		    
		        \legend{}
			\end{semilogyaxis}
			\end{tikzpicture}
        }
        \subfloat[random sparse M-matrix]{
	    	\begin{tikzpicture}
			\begin{semilogyaxis}[
			    width=0.5\textwidth,
			    xlabel={iteration},
			    yticklabels={,,},
			    legend pos=south east,
			    ymin=1e-14, ymax=10,
			    ]

				\addplot+[mark repeat=2,] table {plots/exp/sprand_wo.txt};
			    \addlegendentry{CF-matrix}

				\addplot table {plots/exp/sprand_ILU.txt};
			    \addlegendentry{ILU(0)}

                \pgfplotsset{cycle list shift=1}
				\addplot+[green] table {plots/exp/sprand_PFE.txt};
			    \addlegendentry{PFE}
		    
			\end{semilogyaxis}
			\end{tikzpicture}
        }
        \caption{Convergence for $\exp(-A)$ via GMRES}
        \label{fig:exp}
	\end{figure}

	Note that the poles $\tau_j$ in this example are complex numbers which necessitates the use of complex arithmetic for the shifted systems $(A-\tau_j I) x_j = v$ even though $A$ and $v$ are real. Since, on the other side, the coefficients of the continued fraction are real, complex arithmetic is not required when working with the CF-matrix. In this sense, we can say that the roughly 50\%--80\% increase in iterations for the ILU(0) preconditioned CF-matrix is approximately compensated for by the fact that we avoid complex arithmetic. 
\end{example}
\begin{example}[Inverse square root]
	We now consider
	\begin{equation*}
		f(A)v = A^{-1/2} v.
	\end{equation*}
	The inverse function $f(z)^{-1} = \sqrt{z}$ is not differentiable at 0 which is why we use the Padé approximations for the function $\hat{f}(z) = \sqrt{z+1}$. A C-fraction for $\hat{f}(z)$ under the condition that $\abs{\operatorname{Arg}(z+1)} < \pi$ is given by \cite[eq.~(11.7.1)]{Cuyt}
	\begin{equation*}
		b_0 = 1,\quad c_1 = \frac{1}{2},\quad c_i = \frac{1}{4} \mbox{ for } i > 1.
	\end{equation*}
	To obtain a pencil for $f(z)$, we exploit $\hat{f}(z-1)=f(z)$ which leads to
	\begin{equation*}
		T_{n}^{(0)} - (z-1)T_{n}^{(1)} = (T_n^{(0)} + T_n^{(1)}) - zT_n^{(1)}.
	\end{equation*}
	Thus, we consider the modified CF-matrix
	\begin{equation*}
		T_{20}(A) = (T_{20}^{(0)} + T_{20}^{(1)})\otimes I - T_{20}^{(1)} \otimes A.
	\end{equation*}
	We repeat the procedure of the previous example\footnote{The Padé approximations for this example have again only simple poles.} and plot the results in \cref{fig:invsqrt}. For the 2D Laplace matrix, we see that the ILU(0) preconditioned CF-matrix approach results in much faster convergence than when using the partial fraction expansion, and for the random spare matrix example, the situation is similar, although significantly less pronounced.

    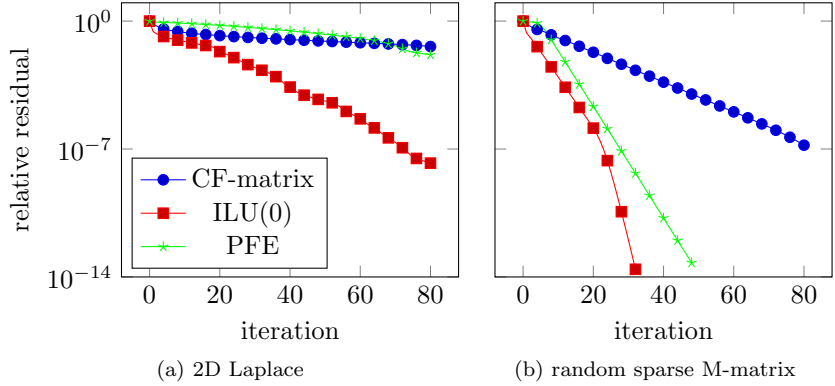
\begin{figure}
        \centering
        \subfloat[2D Laplace]{
        	\begin{tikzpicture}
			\begin{semilogyaxis}[
			    width=0.5\textwidth,
			    xlabel={iteration},
			    ylabel={relative residual},
			    legend pos=south west,
			    ymin=1e-14, ymax=10,
			    mark repeat=4,
			    ]

				\addplot table {plots/invsqrt/laplace_wo.txt};
			    \addlegendentry{CF-matrix}

				\addplot table {plots/invsqrt/laplace_ILU.txt};
			    \addlegendentry{ILU(0)}

                \pgfplotsset{cycle list shift=1}
				\addplot+[green] table {plots/invsqrt/laplace_PFE.txt};
			    \addlegendentry{PFE}
		    
			\end{semilogyaxis}
			\end{tikzpicture}
        }
        \subfloat[random sparse M-matrix]{
	    	\begin{tikzpicture}
			\begin{semilogyaxis}[
			    width=0.5\textwidth,
			    xlabel={iteration},
			    yticklabels={,,},
			    legend pos=south east,	
			    ymin=1e-14, ymax=10,
			    mark repeat=4,
			    ]

				\addplot table {plots/invsqrt/sprand_wo.txt};
			    \addlegendentry{CF-matrix}

				\addplot table {plots/invsqrt/sprand_ILU.txt};
			    \addlegendentry{ILU(0)}

                \pgfplotsset{cycle list shift=1}
				\addplot+[green] table {plots/invsqrt/sprand_PFE.txt};
			    \addlegendentry{PFE}
		    
		        \legend{}
			\end{semilogyaxis}
			\end{tikzpicture}
        }
        \caption{Convergence for $A^{-1/2}$ via GMRES}
        \label{fig:invsqrt}
	\end{figure}
	
	Until now, we only compared the relative residuals of the linear system corresponding to the Pad\'e approximation to the inverse square root. Due
	\begin{equation*}
	    (A^2)^{-1/2} = A^{-1},
	\end{equation*}
	we can determine the {\em error} for the approximation of the inverse square root if we start with a matrix $A$ but approximate $(A^2)^{-1/2}v$ which we compare with the solution of the linear system $Ax = v$.
	We use the 2D Laplace operator again. To prevent the condition number of $A^2$ from being too large, we add a shift of $0.1$ to the matrix $A$, $A \to A+0.01I$. The result is shown in \cref{fig:invsqrt_error}. The plateau to be observed for $n=20$ at $10^{-4}$ for ILU(0)-preconditioned GMRES for the CF-matrix reflects the accuracy of the Pad\`e approximation of  degree $n=20$ to the exact inverse square root.
	 When increasing the degree of the diagonal Padé approximation $n$, we expect this accuracy to increase, as well. Indeed, for $n=40$, the plateau starts to form  at around $10^{-7}$ only (right part of \cref{fig:invsqrt_error}).
	
    \begin{figure}
        \centering
        \subfloat[$n=20$]{
        	\begin{tikzpicture}
			\begin{semilogyaxis}[
			    width=0.5\textwidth,
			    xlabel={iteration},
			    ylabel={relative error},
			    legend pos=south west,
			    ymin=1e-8, ymax=10,
			    mark repeat=2,
			    ]

				\addplot table {plots/invsqrt/error_20_wo.txt};
			    \addlegendentry{CF-matrix}

				\addplot table {plots/invsqrt/error_20_ILU.txt};
			    \addlegendentry{ILU(0)}

                \pgfplotsset{cycle list shift=1}
				\addplot+[green] table {plots/invsqrt/error_20_PFE.txt};
			    \addlegendentry{PFE}
		    
			\end{semilogyaxis}
			\end{tikzpicture}
        }
        \subfloat[$n=40$]{
	    	\begin{tikzpicture}
			\begin{semilogyaxis}[
			    width=0.5\textwidth,
			    xlabel={iteration},
			    yticklabels={,,},
			    legend pos=south east,	
			    ymin=1e-8, ymax=10,
			    mark repeat=2,
			    ]

				\addplot table {plots/invsqrt/error_40_wo.txt};
			    \addlegendentry{CF-matrix}

				\addplot table {plots/invsqrt/error_40_ILU.txt};
			    \addlegendentry{ILU(0)}

                \pgfplotsset{cycle list shift=1}
				\addplot+[green] table {plots/invsqrt/error_40_PFE.txt};
			    \addlegendentry{PFE}
		    
                \legend{}
			\end{semilogyaxis}
			\end{tikzpicture}
        }
        \caption{Comparison of the errors for $(A^2)^{-1/2}$ via GMRES}
        \label{fig:invsqrt_error}
	\end{figure}
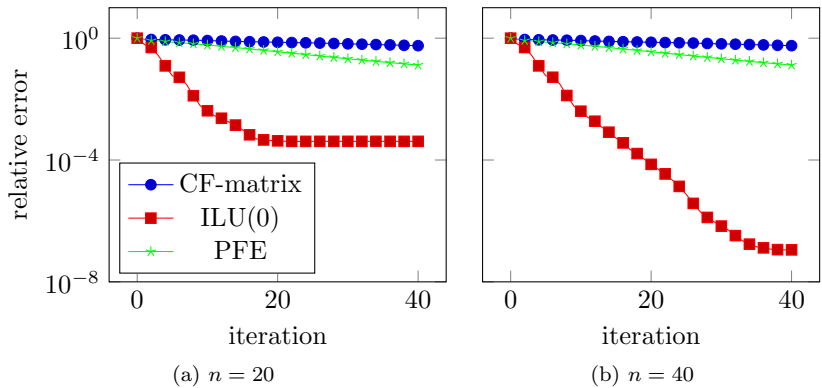
\end{example}

\section{Conclusion and Outlook}
Starting from the observation that the approximants of a continued fraction can be expressed as the (1,1) element of the inverse of a corresponding tridiagonal matrix, we showed that for a rational function $r$ we can describe $r(A)v$ as the solution of a linear system whose coefficient matrix, the CF-matrix, is block tridiagonal. 
What we need is a continued fraction describing $r(z)$. For some continued fractions like regular C-fractions, the resulting CF-matrix contains no higher powers of $A$ and is thus readily available.

For C-fractions, their contractions and similar continued fractions, 
the Kronecker structure of the CF-matrix involves a matrix pencil with tridiagonal matrices. We showed how the partial fraction expansion of $r(z)$ is related to the Weierstrass canonical form of this pencil. We established a connection to Sylvester-type matrix equations and proved that if $r(z)$ has simple poles only and a certain block structure is used, splitting methods like block Jacobi and block Gauß-Seidel on the CF-matrix exhibit the same convergence properties as their nonblock version on the shifted systems $(A-\tau_i I)$ of the partial fraction expansion. In our numerical experiments, we 
showed results that used (preconditioned) GMRES rather than (block) Jacobi or Gauß-Seidel and obtained fast convergence using ILU(0) as a preconditioner.

An immediate application of the CF-matrix approach is in situations where a continued fraction is available but the computation of its partial fraction expansion is ill-conditioned and thus error-prone. In addition, we expect that the investigations presented here lay the ground for the development of further approaches. Our analysis of the connection between Jacobi and Gauß-Seidel on the CF-matrix with these methods on the shifted matrices can be regarded as the first step towards an understanding of smoothers to be used in a multigrid approach on the CF-matrix. The question of how to develop appropriate coarsening strategies is open and has not been addressed in this paper.


\bibliography{bib}

\end{document}